\begin{document}
\theoremstyle{plain}
\newtheorem{theorem}{Theorem}[section]
\newtheorem{corollary}[theorem]{Corollary}
\newtheorem{lemma}[theorem]{Lemma}
\newtheorem{proposition}[theorem]{Proposition}
\newtheorem{remark}[theorem]{Remark}

\theoremstyle{definition}
\newtheorem{defn}{Definition}
\newtheorem{definition}[theorem]{Definition}
\newtheorem{example}[theorem]{Example}
\newtheorem{conjecture}[theorem]{Conjecture}

\title{On the Ranks of Semigroup of Order-preserving or Order-reversing Partial Contraction Mappings on a Finite Chain}
\author{\bf B. Ali, M. A. Jada ~and M. M. Zubairu\footnote{Corresponding Author. ~~Email: $mmzubairu.mth@buk.edu.ng$} \\[2mm]
\it\small Department of Mathematical Sciences, Nigerian Defence Academy, P.M.B. 2109, Kaduna, Nigeria\\
\it\small  \texttt{bali@nda.edu.ng}\\[3mm]
\it\small Department of Mathematics, Sule Lamido University, P.M.B. 048, Kafin Hausa, Nigeria\\
\it\small  \texttt{muhammada.jada@slu.edu.ng}\\[3mm]
\it\small  Department of Mathematics, Bayero  University Kano, P.M.B. 3011, Kano, Nigeria\\
\it\small  \texttt{mmzubairu.mth@buk.edu.ng}\\
}
\date{}
\maketitle\

\begin{abstract}
 Let $\mathcal{CP}_n$ be the semigroup of partial contraction mappings on $[n]=\{1,2,\ldots,n\}$ and let $\mathcal{OCP}_n$ and $\mathcal{ORCP}_n$ be its subsemigroups consisting of all order-preserving and of all order-preserving or order-reversing, partial contraction mappings, respectively. In this paper we obtain the rank of the two semigroups, $\mathcal{OCP}_n$ and $\mathcal{ORCP}_n$, respectively.
 \end{abstract}
\emph{2010 Mathematics Subject Classification. 20M20.}

 {\bf Keywords:} Transformation semigroup, Contraction mappings, Order-preserving, Order-reversing, Generating sets, Ranks

\section{Introduction}
Let $[n]$ denote a finite chain, $\{1,2, \ldots ,n\}$. A \emph{partial transformation} is a map $\alpha: \textrm{dom }\alpha\subseteq [n] \rightarrow \textrm{im }\alpha\subseteq [n]$. Denote $\mathcal{P}_{n}$ to be the \emph{semigroup of all partial transformation} on $[n]$. A map $\alpha\in \mathcal{P}_{n}$ is called a \emph{total} or \emph{full} transformation if $\textrm{dom }\alpha=[n]$. The collection of all full transformations on $[n]$ is known to be the \emph{semigroup of full transformation} denoted by $\mathcal{T}_n$.
For $\alpha\in \mathcal{P}_{n}$, $\alpha$ is said to be \emph{order-preserving} (respectively, \emph{order reversing}) if  (for all $x,y \in \textrm{dom }\alpha$) $x\leq y$ implies $x\alpha\leq y\alpha$ (respectively, $x\alpha\geq y\alpha$); an \emph{isometry} (i. e., \emph{ distance preserving}) if (for all $x,y \in \textrm{dom }\alpha$) $|x\alpha-y\alpha|=|x-y|$; and a \emph{contraction} if (for all $x,y \in \textrm{dom }\alpha$) $|x\alpha-y\alpha|\leq |x-y|$.
The collection of all contraction mappings on $[n]$ is called the \emph{semigroup of partial  contraction mappings} denoted by $\mathcal{CP}_{n}$.

Let
\begin{equation}\label{ocp} \mathcal{OCP}_{n}=\{\alpha\in \mathcal{CP}_{n}:(\textnormal{for all } x,y\in \textnormal{dom }\alpha)~x\leq y~ \Rightarrow~ x\alpha\leq y\alpha\} \end{equation} and
\begin{equation}\label{orcp} \mathcal{ORCP}_{n}= \mathcal{OCP}_{n} \cup \{\alpha\in \mathcal{CP}_{n}:(\textnormal{for all } x,y\in \textnormal{dom }\alpha)~x\leq y~ \Rightarrow~ x\alpha\geq y\alpha\}. \end{equation}
Then, $\mathcal{OCP}_n$ and $\mathcal{ORCP}_n$ are subsemigroups of $\mathcal{CP}_n$ known as the \emph{semigroup of order-preserving partial contraction mappings} and the \emph{semigroup of order-preserving or order-reversing partial contraction mappings}, respectively.

 A non-empty subset $A$ of a semigroup $S$ is said to be a \emph{generating set} of $S$ (denoted as $\langle A\rangle=S$) if for every $x\in S$, $x$ can be written as finite product of some elements in $A$. If $S$ is generated by a finite subset, then, the smallest subset that generate $S$ is called the \emph{minimal generating set} of $S$ while its cardinality is called the \emph{rank} of $S$, denoted by rank($S$). That is, rank$(S)=\min\{|A|: A\subseteq S \textnormal{ and } \langle A\rangle=S\}$.

The importance of the study of rank properties in semigroup theory cannot be over emphasized. Various scholars study rank properties on the semigroup $\mathcal{P}_{n}$ and many of its subsemigroups over the years, and fruitful results were recorded, see for example, \cite{Ayk,Gr,gu2,goms,H2}.

The semigroup of partial contraction mappings being a relatively new semigroup with promising research problems attracts the attention of many researchers in the area. For example, Adeshola in~\cite{ade}, investigated some algebraic and combinatorial properties of some subsemigroups of $\mathcal{CT}_n$ (the semigroup of full contraction mappings) and obtain the cardinalities of some of its subsemigroups. The study of Green's and starred Green's relations of $\mathcal{CT}_n$ was presented in Garba \emph{et al.,}~\cite{jm}. The results in~\cite{jm} were extended in~\cite{zb2} to give complete characterization of the Green's relations on $\mathcal{CT}_{n}$. Many more algebraic and combinatorial studies on $\mathcal{CP}_{n}$ and some of its subsemigroup were conducted see,~\cite{zb,zb1,zao} which by no means a complete list.

However, a lot of questions regarding the rank properties of $\mathcal{CP}_{n}$ and many of its subsemigroups are still unanswered. Recently, Toker examined the ranks of the semigroup of \emph{order-preserving or order-reversing full contractions}, $\mathcal{ORCT}_{n}$ and that of its subsemigroup $\mathcal{OCT}_n$, consisting of all order-preserving elements in~\cite{tok}. Bugay~\cite{bugay} extended the work in \cite{tok} by considering the ideals of the semigroups $\mathcal{ORCT}_{n}$ and $\mathcal{OCT}_n$ respectively. In~\cite{jd}, the study of nilpotent elements in $\mathcal{OCP}_{n}$ and its subsemigroup $\mathcal{OCI}_{n}$, consisting of all partial one-to-one transformations was presented and the ranks of the subsemigroups generated by the nilpotents was obtained. Also, in~\cite{gu} the study of algebraic and rank properties of the semigroup $\mathcal{OCI}_{n}$ was conducted. Umar and Alkharousi study the rank and combinatorial properties of the semigroups of partial isometries in~\cite{ak1,ak2}. The study of the rank properties of isometries were further exploited in~\cite{me,ayik}.

Having studied (by Toker~\cite{tok} and Bugay~\cite{bugay}) the rank properties of the semigroups, $\mathcal{ORCT}_{n}$ and $\mathcal{OCT}_n$, in this paper we consider a larger semigroup, $\mathcal{ORCP}_{n}$ and its subsemigroup, $\mathcal{OCP}_{n}$, and investigated their ranks. The structure of the paper is as follows. After this introductory section, followed is the preliminary section. In section $3$, we present the rank of $\mathcal{OCP}_{n}$, and show that the semigroup $\mathcal{OCP}_{n}$ has rank $2n$ for all $n\geq 3$. The results in section $3$ were extended in section $4$ and shown that, despite the semigroup $\mathcal{ORCP}_{n}$ is larger, it also has the same rank as $\mathcal{OCP}_{n}$.


\section{Preliminaries}
In this section, we give definition of some basic terms. We also quote some results from related literature that will be needed in proving some results in the paper.

Let $\alpha$ be an element of $\mathcal{CP}_{n}$. Denote $\textrm{dom }\alpha$, $\textrm{im }\alpha$, and  $h(\alpha)$ to be the domain of $\alpha$, image of $\alpha$, and $|\textrm{im }\alpha|$, respectively. The kernel of $\alpha$ is denoted by ker~$\alpha$, and defined as  ker~$\alpha=\{(x,y)\in \textnormal{dom }\alpha\times \textnormal{dom }\alpha: x\alpha=y\alpha\}$.
  Furthermore, for $\alpha,\beta \in \mathcal{CP}_{n}$, the composition of $\alpha$ and $\beta$ is defined as $x(\alpha \circ \beta) =((x)\alpha)\beta$ for all $x$ in dom~$\alpha$.  Without ambiguity, we shall be using the notation $\alpha\beta$ to denote $\alpha \circ\beta$ and $x\alpha$ to serve as the image of $x$ under $\alpha$ instead of the notation $\alpha(x)$ so that our composition of maps reads from left to right, i. e., $x\alpha\beta$ to serve as $\beta(\alpha(x))$. Also, a non-empty subset $A$ of $[n]$ is called convex if for every $x,y \in A$, $x< z < y$ implies $z\in A$. We shall write $1_A$ for any subset $A\subseteq [n]$ to denote partial identity map on $A$.

 Next, given any transformation $\alpha$ in $\mathcal{ORCP}_n$, $\alpha$ can be expressed as \begin{equation}\label{alf} \alpha=\left(\begin{array}{cccc}
            A_1 & A_2 & \ldots & A_p \\
            b_1 & b_2 & \ldots & b_p
          \end{array} \right)\, (1\leq p\leq n). \end{equation}
 The blocks $A_i (1\leq i\leq p)$ are equivalence classes under the relation ker~$\alpha$, and the collection of all the equivalence classes of the relation ker~$\alpha$  partitioned the domain of $\alpha$ called the \emph{kernel partition} of $\alpha$, denoted by kp$(\alpha)$, i. e., $[n]\backslash \textnormal{ker }\alpha = \textnormal{kp}(\alpha)$. Moreover, kp$(\alpha)$ is ordered under the usual ordering, that is, $A_1<A_2< \cdots < A_p$, where $A_i<A_j$ if and only if $a<b$ for all $a\in A_i$ and $b\in A_j$. Further, for each block $A_i$, if $|A_i|\geq 2$ then $a_i<a_{i+1}\in A_i$. In addition, if $\alpha$ is order-preserving then $b_1<b_2<\ldots <b_p$, and if $\alpha$ is order-reversing then $b_p<b_{p-1} <\ldots <b_1$. Thus, for the rest of the content of the work we shall consider $\alpha\in\mathcal{ORCP}_n$ to be as expressed in equation~\eqref{alf} unless otherwise specified.

Let $S$ be a semigroup and $x\in S$. Then, $Sx$, $xS$, and $SxS$ are principal ideals generated by $x$ in $S$. They are known as the \emph{principal left, principal right}, and \emph{principal two sided} ideal, respectively generated by $x$. Green's relations are equivalence relations defined on a semigroup based on the principal ideals its elements generate. These relations are vital in understanding the algebraic structure of any semigroup more especially regular semigroups. There are five of these equivalences, namely, $\mathcal{L}$, $\mathcal{R}$, $\mathcal{J}$, $\mathcal{D}$ and $\mathcal{H}$ relation defined as follows:  For all $a,b\in S$,
\begin{eqnarray*}
    (a,b)\in \mathcal{L} &\Longleftrightarrow& S^{1}a= S^{1}b; \\
    (a,b)\in \mathcal{R} &\Longleftrightarrow& aS^{1}= bS^{1}; \\
    (a,b)\in \mathcal{J} &\Longleftrightarrow& S^{1}a S^{1}= S^{1}b S^{1}.
\end{eqnarray*} The symbol $S^1$ denote a semigroup $S$ with adjoint identity if $S$ does not have one. $\mathcal{L}$ and $\mathcal{R}$ always commute and their composition gives a relation $\mathcal{D}$ (i. e., $\mathcal{L}\circ \mathcal{R} = \mathcal{R} \circ \mathcal{L}= \mathcal{D}$), while their intersection gives a relation $\mathcal{H}$ (i. e., $\mathcal{L}\cap\mathcal{R} = \mathcal{H}$). If a semigroup is non-regular, then, there is also need to understand its starred Green's relation (which is the generalization of the Green's relations) in order to classify such semigroup or study its rank properties. The starred Green's equivalences are also five, they are $\mathcal{L}^*$, $\mathcal{R}^*$, $\mathcal{J}^*$, $\mathcal{D}^*$, and $\mathcal{H}^*$ defined as follows: Given any semigroup $S$, $a\mathcal{L}^* b $ (resp., $a\mathcal{R}^* b $) if and only if $a$ and $b$ are Green's $\mathcal{L}$-related (resp., Green's $\mathcal{R}$-related) in some over-semigroup of $S$. $\mathcal{D}^*$ is a meet of $\mathcal{L}^*$ and $\mathcal{R}^*$, while $\mathcal{H}^*$ is their joint. For more properties of Green's relation and any other unexplained term we refer the reader to \cite{grn,howi,ph}.

Below are some results concerning starred Green's relations on the semigroups of $\mathcal{OCP}_n$ and $\mathcal{ORCP}_n$ from \cite{zb1}, which shall be needed in the subsequent sections.

\begin{theorem}\label{zbt}\cite[Theorem 1]{zb1} Let $\alpha,\beta\in S$ (where $S\in \{\mathcal{CP}_n, \mathcal{ORCP}_n,\mathcal{OCP}_n\})$. Then,
\begin{itemize}
 \item[(i)] $\alpha \mathcal{L}^* \beta$ if and only if $\textnormal{im }\alpha=\textnormal{im }\beta$.
 \item[(ii)] $\alpha \mathcal{R}^* \beta$ if and only if $\textnormal{ker }\alpha=\textnormal{ker }\beta$.
 \item[(iii)] $\alpha \mathcal{H}^* \beta$ if and only if $\textnormal{ker }\alpha=\textnormal{ker }\beta$ and $\textnormal{im }\alpha=\textnormal{im }\beta$.
 \item[(iv)] $\alpha \mathcal{D}^* \beta$ if and only if $|\textnormal{im }\alpha|=|\textnormal{im }\beta|$.
\end{itemize}
\end{theorem}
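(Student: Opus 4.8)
\noindent The plan is to run everything off the standard internal descriptions of the starred relations: for $a,b$ in a semigroup $S$ one has $a\,\mathcal{L}^*\,b$ iff $ax=ay\Leftrightarrow bx=by$ for all $x,y\in S^1$, $a\,\mathcal{R}^*\,b$ iff $xa=ya\Leftrightarrow xb=yb$ for all $x,y\in S^1$, $\mathcal{H}^*=\mathcal{L}^*\cap\mathcal{R}^*$, and $\mathcal{D}^*$ is the transitive closure of $\mathcal{L}^*\cup\mathcal{R}^*$. The only structural input I would use is that each $S\in\{\mathcal{CP}_n,\mathcal{ORCP}_n,\mathcal{OCP}_n\}$ contains the empty map, every constant map $\overline{c}$ ($c\in[n]$), and every partial identity $1_A$ ($A\subseteq[n]$), since these are all order-preserving contractions; so one argument handles all three semigroups at once. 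A short computation with partial maps (keeping the left-to-right convention in mind) yields the two facts that drive the proof: for $\mu,\nu\in S^1$, $\alpha\mu=\alpha\nu$ holds exactly when $\mu$ and $\nu$ have the same restriction to $\operatorname{im}\alpha$, and $\mu\alpha=\nu\alpha$ holds under a condition referring only to $\operatorname{dom}\alpha$ and $\ker\alpha$.

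Given this, (i) and (ii) are short. For (i), ($\Leftarrow$) is immediate because $\operatorname{im}\alpha=\operatorname{im}\beta$ makes ``$\alpha\mu=\alpha\nu$'' and ``$\beta\mu=\beta\nu$'' literally the same condition; for ($\Rightarrow$), if some $c\in\operatorname{im}\beta\setminus\operatorname{im}\alpha$ (or symmetrically) I would take $\mu=1_{[n]}$ and $\nu=1_{\operatorname{im}\alpha}$, so that $\alpha\mu=\alpha=\alpha\nu$ while $\beta\nu=\beta\cdot 1_{\operatorname{im}\alpha}$ has strictly smaller domain than $\beta=\beta\mu$, contradicting $\alpha\,\mathcal{L}^*\,\beta$. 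For (ii), ($\Leftarrow$) follows from the second fact once one notes that $\ker\alpha=\ker\beta$ already forces $\operatorname{dom}\alpha=\operatorname{dom}\beta$; for ($\Rightarrow$), if $\operatorname{dom}\alpha\neq\operatorname{dom}\beta$ I would pick $w\in\operatorname{dom}\beta\setminus\operatorname{dom}\alpha$ and take $\mu=\overline{w}$, $\nu=\varnothing$, whereas if the domains agree but the kernel partitions differ I would pick (after possibly swapping $\alpha$ and $\beta$) a pair $(u,v)\in\ker\alpha\setminus\ker\beta$ and take $\mu=\overline{u}$, $\nu=\overline{v}$; in each case $\mu\alpha=\nu\alpha$ but $\mu\beta\neq\nu\beta$, contradicting $\alpha\,\mathcal{R}^*\,\beta$. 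Part (iii) is then just $\mathcal{H}^*=\mathcal{L}^*\cap\mathcal{R}^*$ applied to (i) and (ii).

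For (iv), ($\Rightarrow$) is soft: $\mathcal{L}^*$ preserves $\operatorname{im}\alpha$ and $\mathcal{R}^*$ preserves $\ker\alpha$, hence both preserve $|\operatorname{im}\alpha|$, so this number is constant along any chain of $\mathcal{L}^*$- and $\mathcal{R}^*$-steps and therefore on $\mathcal{D}^*$-classes. For ($\Leftarrow$) I would link every $\alpha$ with $|\operatorname{im}\alpha|=r$ to the single idempotent $1_{\{1,\dots,r\}}$ in two moves: writing $\operatorname{kp}(\alpha)=\{A_1<\dots<A_r\}$, let $\gamma$ send each block $A_i$ to $i$; then $\ker\gamma=\ker\alpha$ and $\operatorname{im}\gamma=\{1,\dots,r\}$, so $\alpha\,\mathcal{R}^*\,\gamma$ and $\gamma\,\mathcal{L}^*\,1_{\{1,\dots,r\}}$ by (i) and (ii), and transitivity of $\mathcal{D}^*$ then gives $\alpha\,\mathcal{D}^*\,\beta$ whenever $|\operatorname{im}\alpha|=|\operatorname{im}\beta|$. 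The step I expect to be the main obstacle is checking that this $\gamma$ really belongs to $S$, i.e.\ that it is a contraction; this reduces to the inequality $\min A_j-\max A_i\geq j-i$ for $i<j$, which follows by induction from the blocks $A_1,\dots,A_r$ being nonempty and strictly increasing. The other thing to be careful about throughout is the order of composition, so that it is $\mathcal{L}^*$ and not $\mathcal{R}^*$ that records the image.
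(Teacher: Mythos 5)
First, a point of reference: the paper does not prove this theorem at all --- it is quoted from \cite[Theorem 1]{zb1} --- so there is no in-paper argument to compare yours against. Judged on its own, your proof of (i)--(iii) is sound: the internal characterisation of $\mathcal{L}^*$ and $\mathcal{R}^*$ via ``$a\mu=a\nu\Leftrightarrow b\mu=b\nu$'' is the standard reformulation of the over-semigroup definition; your two facts about when $\alpha\mu=\alpha\nu$ and $\mu\alpha=\nu\alpha$ are correct for partial maps composed left-to-right; and the witnesses you choose ($1_{[n]}$, $1_{\operatorname{im}\alpha}$, constant maps, the empty map) all lie in each of the three semigroups, so a single argument does cover $\mathcal{CP}_n$, $\mathcal{ORCP}_n$ and $\mathcal{OCP}_n$ at once. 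The forward direction of (iv) is also fine.

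The genuine gap is in the backward direction of (iv), exactly at the step you flagged, and it bites only for $S=\mathcal{CP}_n$: the kernel classes of an arbitrary partial contraction need not be convex, hence cannot in general be written as $A_1<\cdots<A_r$, and the inequality $\min A_j-\max A_i\geq j-i$ can fail. For instance, in $\mathcal{CP}_3$ the map $1\mapsto 1$, $2\mapsto 2$, $3\mapsto 1$ is a contraction whose kernel classes $\{1,3\}$ and $\{2\}$ interleave. (For $\mathcal{OCP}_n$ and $\mathcal{ORCP}_n$ the classes are convex and your induction goes through.) The repair is small: index the blocks by their images rather than by their position in $[n]$. Write $\operatorname{im}\alpha=\{b_1<\cdots<b_r\}$, put $A_i=b_i\alpha^{-1}$, and let $\gamma$ send $A_i$ to $i$; equivalently $\gamma=\alpha\lambda$, where $\lambda\colon b_i\mapsto i$ is an order-preserving contraction since $|i-j|\leq|b_i-b_j|$. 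Then for $x\in A_i$, $y\in A_j$ one gets $|x\gamma-y\gamma|=|i-j|\leq|b_i-b_j|=|x\alpha-y\alpha|\leq|x-y|$, so $\gamma\in\mathcal{CP}_n$ with $\ker\gamma=\ker\alpha$ and $\operatorname{im}\gamma=\{1,\ldots,r\}$, and your chain $\alpha\,\mathcal{R}^*\,\gamma\,\mathcal{L}^*\,1_{\{1,\ldots,r\}}$ closes the argument for all three semigroups. Note finally that the present paper only ever invokes parts (i)--(ii) for $\mathcal{OCP}_n$ and $\mathcal{ORCP}_n$, where your proof as written is already complete.
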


The following lemmas are also needed in proving Lemma~\ref{jr} and~\ref{jp}, respectively.

 Let $ D_p^*=\{\alpha \in \mathcal{OCT}_n: |\textnormal{im }\alpha|=p\}$, and let $ F_p^*=\{\alpha \in \mathcal{ORCT}_n: |\textnormal{im }\alpha|=p\}$. Then,

\begin{lemma}\label{tk1}\cite[Lemma 3.1]{tok} If $\alpha \in D_p^*$ then $\alpha \in \langle D_{p+1}^*\rangle$ for all $1\leq p\leq n-2$.
\end{lemma}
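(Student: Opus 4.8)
The plan is to exhibit, for a given $\alpha\in D_p^*$, two maps $\beta,\gamma\in D_{p+1}^*$ with $\alpha=\beta\gamma$; a single such factorization already witnesses $\alpha\in\langle D_{p+1}^*\rangle$. The key preliminary observation is structural: an element of $\mathcal{OCT}_n$ is a \emph{full} order‑preserving map, so its kernel classes are convex sets that partition $[n]$ with no gaps; hence any two successive classes lie at distance exactly $1$ in $[n]$, and the contraction property then forces their images to differ by exactly $1$ (they are distinct and increasing). Consequently $\mathrm{im}\,\alpha$ is a block $[a,a+p-1]$ for some $1\le a\le n-p+1$, and since $p\le n-2<n$ this interval is not all of $[n]$, so either $a\ge 2$ or $a+p\le n$. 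Applying the order‑reversing reflection $i\mapsto n+1-i$ (which maps $D_p^*$ onto itself by conjugation and interchanges these two cases), we may assume without loss of generality that $a+p\le n$, i.e.\ there is room to the right of $\mathrm{im}\,\alpha$.

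Now, since $p\le n-2<n$, some kernel class $A_m$ of $\alpha$ satisfies $|A_m|\ge 2$; write $A_m=C\sqcup C'$ with $C<C'$ and both parts nonempty. Let $\beta\in\mathcal{OCT}_n$ be the order‑preserving map whose kernel classes, listed in order, are $A_1,\dots,A_{m-1},C,C',A_{m+1},\dots,A_p$ (again $p+1$ consecutive intervals partitioning $[n]$), with the $k$‑th class sent to $a+k-1$; then $\mathrm{im}\,\beta=[a,a+p]\subseteq[n]$, so $h(\beta)=p+1$, and $\beta$ is a contraction because the $k$‑th and $\ell$‑th classes are at distance at least $|\ell-k|$ while their images are exactly $|\ell-k|$ apart. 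Define $\gamma$ first on $[a,a+p]$ by $\gamma(x)=x$ for $a\le x\le a+m-1$ and $\gamma(x)=x-1$ for $a+m\le x\le a+p$ (a prefix identity followed by a unit downward shift, clearly an order‑preserving contraction onto $[a,a+p-1]$), and extend it to the rest of $[n]$ by a constant on each side: send $[1,a-1]$ to $a-1$ and $[a+p+1,n]$ to $a+p-1$ when $a\ge 2$, and in the remaining case $a=1$ send $[p+2,n]$ to $p+1$. The extension keeps $\gamma$ order‑preserving and a contraction, and it raises the image to size exactly $p+1$. A block‑by‑block check then gives $\alpha=\beta\gamma$: for $i\ne m$, $A_i$ is sent by $\beta$ to $a+i-1$ (if $i<m$) or to $a+i$ (if $i>m$) and then by $\gamma$ to $a+i-1=A_i\alpha$, while $C,C'$ are sent to $a+m-1,a+m$ and then both to $a+m-1=A_m\alpha$.

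The main obstacle is the (routine but delicate) verification that $\gamma$, including its constant extension over $[1,a-1]\cup[a+p+1,n]$, is genuinely an order‑preserving contraction of rank exactly $p+1$: a careless extension either violates the contraction inequality at the seam $x=a+p,\ y=a+p+1$, or collapses everything and fails to raise the rank, and it is precisely here that the hypothesis $p\le n-2$ (guaranteeing room outside $\mathrm{im}\,\alpha$ on at least one side) is consumed. Everything else — the interval form of $\mathrm{im}\,\alpha$, the contraction check for $\beta$, and the arithmetic of $\alpha=\beta\gamma$ — is short. The same construction, suitably mirrored, will serve in the order‑reversing setting when $F_p^*$ is treated.
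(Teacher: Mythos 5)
Your proof is correct, but be aware that the paper itself offers no proof of this statement: it is imported verbatim from Toker (\cite[Lemma 3.1]{tok}), so there is no in-paper argument to compare against, and your write-up in effect supplies the missing ingredient that makes Case I of the paper's Lemma 3.1 self-contained. Checking your details: for a full order-preserving map the kernel classes are adjacent convex blocks partitioning $[n]$, so the contraction condition forces consecutive image points to differ by exactly $1$ and $\mathrm{im}\,\alpha=[a,a+p-1]$ is an interval; conjugation by $i\mapsto n+1-i$ preserves $D_{p}^{*}$ and legitimately reduces to the case $a+p\le n$; pigeonhole ($p\le n-2<n$) gives a class to split; your $\beta$ is a full order-preserving contraction of rank $p+1$ because class $k$ and class $\ell$ sit at distance at least $|\ell-k|$ while their images sit at exactly $|\ell-k|$; and the seam checks for $\gamma$ (at $a+m-1,a+m$ and at $a+p,a+p+1$, and between the two constant tails) all pass, with $p\le n-2$ consumed exactly where you say, namely to make $[p+2,n]\neq\emptyset$ when $a=1$. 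The block-by-block computation of $\beta\gamma=\alpha$ is also right, including the boundary subcases $m=1$ and $m=p$. Your template — split one kernel class to raise the rank, then merge two adjacent image points with a second rank-$(p+1)$ map — is the same two-factor factorization the present paper deploys in its own Lemma 3.1 for the genuinely partial cases (its displays (5)--(12)), so your argument fits the paper's method seamlessly rather than diverging from it.
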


\begin{lemma}\label{tk2}\cite[Lemma 4.1]{tok} If $\alpha \in F_p^*$ then $\alpha \in \langle F_{p+1}^*\rangle$ for all $1\leq p\leq n-2$.
\end{lemma}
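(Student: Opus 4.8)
The plan is to handle the order-preserving elements of $F_p^*$ directly via Lemma~\ref{tk1}, and to reduce the order-reversing ones to that case by pre-composing with the reflection of the chain. First I would record two easy facts: since $\mathcal{ORCT}_n$ is the union of $\mathcal{OCT}_n$ with the set of order-reversing full contractions, $F_p^*$ is the union of $D_p^*$ with the set of order-reversing members of $F_p^*$; and clearly $D_{p+1}^*\subseteq F_{p+1}^*$. So if $\alpha\in F_p^*$ is order-preserving, then $\alpha\in D_p^*$, and Lemma~\ref{tk1} (its hypothesis $1\le p\le n-2$ being exactly ours) immediately yields $\alpha\in\langle D_{p+1}^*\rangle\subseteq\langle F_{p+1}^*\rangle$. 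This leaves only the case where $\alpha\in F_p^*$ is order-reversing.

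For that case I would introduce $\rho=\left(\begin{array}{cccc}1 & 2 & \cdots & n\\ n & n-1 & \cdots & 1\end{array}\right)$, the reflection of $[n]$. It is an order-reversing isometry, hence $\rho\in\mathcal{ORCT}_n$; it is a bijection with $[n]\rho=[n]$; and $\rho^2$ is the identity of $\mathcal{T}_n$. Since $\rho$ and $\alpha$ both reverse order, $\rho\alpha$ is order-preserving; it is a contraction, being a composite of contractions; and $\textnormal{im }(\rho\alpha)=([n]\rho)\alpha=[n]\alpha=\textnormal{im }\alpha$, so $|\textnormal{im }(\rho\alpha)|=p$. Hence $\rho\alpha\in D_p^*$, and Lemma~\ref{tk1} supplies a factorization $\rho\alpha=\delta_1\delta_2\cdots\delta_k$ with $k\ge 1$ and each $\delta_j\in D_{p+1}^*$.

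The final step multiplies through by $\rho$ once more: $\alpha=\rho^2\alpha=\rho(\rho\alpha)=(\rho\delta_1)\delta_2\cdots\delta_k$. Here $\rho\delta_1$ is a contraction, it is order-reversing (an order-reversing map composed with an order-preserving one), and $\textnormal{im }(\rho\delta_1)=[n]\delta_1=\textnormal{im }\delta_1$ has $p+1$ elements, so $\rho\delta_1\in F_{p+1}^*$; while $\delta_2,\dots,\delta_k\in D_{p+1}^*\subseteq F_{p+1}^*$. Therefore $\alpha\in\langle F_{p+1}^*\rangle$, which completes the reduction and the proof.

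Given Lemma~\ref{tk1}, I do not expect a genuine obstacle: the only points needing care are the routine facts that an order-reversing map composed with an order-preserving map is order-reversing, that $\rho$ really lies in $\mathcal{ORCT}_n$, and that composing with the bijection $\rho$ leaves image sizes unchanged. If instead one wanted a proof that does not invoke Lemma~\ref{tk1}, the hard part would be a direct factorization $\alpha=\beta\gamma$ with $\beta,\gamma\in F_{p+1}^*$: split a kernel class $A_i$ of $\alpha$ with $|A_i|\ge 2$ into sub-blocks $A_i'<A_i''$, let $\beta$ pull these two sub-blocks apart while fixing the remaining blocks, and let $\gamma$ re-merge their images; arranging $\beta$ and $\gamma$ to be simultaneously order-reversing and contractive, and using a point of $[n]\setminus\textnormal{im }\alpha$ (which exists precisely because $p\le n-2$) to bring $|\textnormal{im }\gamma|$ up to $p+1$, is the delicate part.
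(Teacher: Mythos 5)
Your argument is correct, and it is genuinely different from how this statement is handled: the paper does not prove Lemma~\ref{tk2} at all but imports it from Toker, and both Toker's proof and this paper's own analogue for partial maps (Lemma~\ref{jp}) treat the order-reversing case by a long explicit case analysis, constructing a two-factor decomposition $\alpha=\beta\gamma$ separately for each position of the extra point and each boundary configuration of the image. Your reduction via the reflection $\rho$ replaces all of that with a three-line formal deduction from Lemma~\ref{tk1}: the checks you flag all go through ($\rho$ is an order-reversing isometry, hence in $\mathcal{ORCT}_n$ with $\rho^2=1_{[n]}$; $\rho\alpha$ is an order-preserving full contraction with $\textnormal{im}(\rho\alpha)=\textnormal{im}\,\alpha$; absorbing $\rho$ into the first factor keeps that factor in $F_{p+1}^*$ because composing order-reversing with order-preserving under the paper's left-to-right convention is order-reversing and $\rho$ is a bijection, so image sizes are preserved). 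One micro-remark: the case $k=1$ in your factorization cannot actually occur, since it would force $|\textnormal{im}(\rho\alpha)|=p+1\neq p$; but your argument does not depend on this either way. What the explicit construction buys over your reduction is a guaranteed length-two factorization $\alpha=\beta\gamma$ with both factors in $F_{p+1}^*$ (occasionally useful for depth or counting arguments), whereas your factorization has whatever length Lemma~\ref{tk1} supplies; what your reduction buys is brevity, a clear conceptual reason why the order-reversing case is no harder than the order-preserving one, and immunity to the kind of bookkeeping slips that case-by-case constructions of $\beta$ and $\gamma$ invite.
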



\section{Rank of $\mathcal{OCP}_n$}
   In this section, we compute the rank of the semigroup of order-preserving partial contraction mappings, $\mathcal{OCP}_n$.

  For $1\leq p\leq n$, let
\begin{equation}\label{kp} K_p=\{\alpha \in \mathcal{OCP}_n: |\textnormal{im }\alpha|=p\}\end{equation}
    and
 \begin{equation}\label{idl} \mathcal{OCP}_{n,p}=\{\alpha \in \mathcal{OCP}_n: |\textnormal{im }\alpha|\leq p\}. \end{equation}
 Observe that $K_p \subseteq \mathcal{OCP}_{n,p}$ for all $p$.

 We now proof the following lemma which is indispensable in finding the rank of $\mathcal{OCP}_{n,n-1}$.
\begin{lemma}\label{jr} $K_p\subseteq \langle K_{p+1}\rangle$, for all $1\leq p\leq n-2$.
\end{lemma}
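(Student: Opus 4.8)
The plan is to show every $\alpha\in K_p$ is a product of elements of $K_{p+1}$, distinguishing whether or not $\alpha$ is a full map.

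\textbf{If $\alpha$ is full.} Then $\alpha\in\mathcal{OCT}_n$, and as $|\textnormal{im }\alpha|=p$ we have $\alpha\in D_p^*$. Since $p\le n-2$, Lemma~\ref{tk1} gives $\alpha\in\langle D_{p+1}^*\rangle$, and since $D_{p+1}^*\subseteq K_{p+1}$ (a full order-preserving contraction of image size $p+1$ is in particular an order-preserving partial contraction of image size $p+1$), we get $\alpha\in\langle K_{p+1}\rangle$. So this case needs no new argument.

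\textbf{If $\alpha$ is not full.} Write $\alpha$ as in~\eqref{alf}, with kernel classes $A_1<\cdots<A_p$ mapped to $b_1<\cdots<b_p$. I will exhibit $\beta,\gamma\in K_{p+1}$ with $\alpha=\beta\gamma$. The guiding idea: take $\beta$ to be a ``spread-out'' copy of $\alpha$ with exactly one extra value in its image, and $\gamma$ to undo the spreading while carrying one free fixed point of its own (so that $|\textnormal{im }\gamma|=p+1$ as well); crucially the spreading in $\beta$ and the contracting in $\gamma$ are performed \emph{in tandem} (shifting a block of consecutive image values by $1$) rather than by merging two values across a gap, which is what keeps both maps contractions. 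There are three subcases. (i) \emph{$\alpha$ not injective:} some $|A_i|\ge 2$, and one first observes that $\alpha$ cannot have both $b_1=1$ and $b_p=n$ — those force $A_1=\{1\}$ and $A_p=\{n\}$, and then comparing any $x\in A_j$ with the elements $1\in A_1$ and $n\in A_p$ forces $x=b_j$, so every class is a singleton, contradicting non-injectivity. Hence $b_p\le n-1$ or $b_1\ge 2$; accordingly split $A_i$ into its lower and upper parts and shift the values $b_i,b_{i+1},\dots,b_p$ (resp.\ $b_1,\dots,b_i$) up (resp.\ down) by $1$ to form $\beta$, with $\gamma$ collapsing the split pair back and contracting those shifted values in step. (ii) \emph{$\alpha=1_D$ a partial identity} (so $|D|=p\le n-2$): choosing distinct $c_1,c_2\notin D$ gives $1_D=1_{D\cup\{c_1\}}\,1_{D\cup\{c_2\}}$ with both factors in $K_{p+1}$. (iii) \emph{$\alpha$ injective but $\ne 1_D$:} then one of the two ends of $\textnormal{im }\alpha$, or a gap of $\textnormal{dom }\alpha$, provides room to build an injective $\beta$ with $|\textnormal{im }\beta|=p+1$ whose domain is $\textnormal{dom }\alpha$ together with one adjoined point $c$, and $\gamma$ the matching correction (with $c\beta\notin\textnormal{dom }\gamma$, so $c\notin\textnormal{dom}(\beta\gamma)$); the needed room again comes only from $n-p\ge 2$. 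In each subcase a direct check gives $\textnormal{dom}(\beta\gamma)=\textnormal{dom }\alpha$ and $\beta\gamma=\alpha$, so $\alpha\in\langle K_{p+1}\rangle$.

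\textbf{Main obstacle.} All the real work is in the non-full case, and the delicate part is keeping the constructed maps \emph{contractions}: merging or shifting a single value can enlarge a distance elsewhere, so the shifts in $\beta$ and $\gamma$ have to be coordinated, and the ``boundary'' configurations (where $\textnormal{im }\alpha$ or $\textnormal{dom }\alpha$ abuts $1$ or $n$) and the ``tight'' configurations (where $\textnormal{dom }\alpha$ or $\textnormal{im }\alpha$ is an interval) must be handled separately. The two facts that make every subcase go through are: $\textnormal{im }\alpha$ omits at least two points of $[n]$ since $p\le n-2$; and whenever $|A_i|\ge 2$, the contraction inequalities for $\alpha$ hold across $A_i$ with a slack of at least $1$ — precisely the room needed to move an image value by $1$ without violating the contraction condition.
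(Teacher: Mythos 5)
Your overall strategy is the paper's (factor $\alpha=\beta\gamma$ with $\beta,\gamma\in K_{p+1}$, dispatching full maps via Lemma~\ref{tk1}), but your case division is genuinely different: you split the non-full case by injectivity (non-injective / partial identity / injective non-identity), whereas the paper splits by convexity of $\textnormal{im }\alpha$ (convex: adjoin $c\notin\textnormal{dom }\alpha$, compress onto an initial segment and correct; non-convex: insert a point into an image gap and follow with a partial identity). Your subcase (i) is sound: the observation that $b_1=1$ and $b_p=n$ together force $\alpha$ to be a partial identity is correct, and the slack $\max A_i-\min A_i\ge 1$ is exactly what keeps the block-splitting $\beta$ a contraction across the split; subcase (ii) is immediate. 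You also get right a detail the paper fumbles: in the paper's Case~II, $\gamma$ is taken to be the partial identity on $\textnormal{im }\beta\cup\{x\}$, which yields $\beta\gamma=\beta\ne\alpha$; it must be the partial identity on $\textnormal{im }\alpha\cup\{x\}$ with $x\notin\textnormal{im }\beta$, so the inserted point drops out of $\textnormal{dom}(\beta\gamma)$ --- your explicit condition $c\beta\notin\textnormal{dom }\gamma$ is the correct formulation of this.

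The soft spot is subcase (iii). For an injective $\alpha$ whose image is \emph{not} convex, your ``spread in $\beta$, contract in tandem in $\gamma$'' template cannot be realized by compressing $\textnormal{dom }\alpha\cup\{c\}$ onto an interval: the return map $\gamma$ would have to send consecutive integers to $b_j$ and $b_{j+1}$ with $b_{j+1}-b_j\ge 2$, which is not a contraction. What saves this case is a different mechanism --- the contraction property forces a domain gap $\min A_{i+1}-\max A_i\ge 2$ opposite every image gap, so one inserts $\max A_i+1\mapsto b_i+1$ to form $\beta$ and takes $\gamma=1_{\textnormal{im }\alpha\cup\{x\}}$ --- and your phrase ``a gap of $\textnormal{dom }\alpha$ provides room'' points at it without saying so. As written, subcase (iii) is a statement of intent rather than a construction; it is completable, but the convex/non-convex dichotomy you avoided at the top level reappears here and must be made explicit (convex image: compress and correct, using an end of $\textnormal{im }\alpha$; non-convex image: fill an image gap) for the proof to close.
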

\begin{proof} Let $\alpha \in K_p$ be as expressed in equation~\eqref{alf} where $1\leq p\leq n-2$. Then, we consider the following cases:

{\bf Case I:}  Suppose $\textnormal{im }\alpha$ is convex. If $\textnormal{dom }\alpha=[n]$, then $\alpha\in \mathcal{OCT}_n$ and the result follows from Lemma~\ref{tk1}. Now, suppose $\textnormal{dom }\alpha \subset [n]$ and let $c$ be in $[n]\backslash \textnormal{dom }\alpha$.
 If $c< \min A_1$ and $b_p<n$, then define $\beta$ and $\gamma$, respectively as:
 \begin{equation}\label{m1}
        \left(\begin{array}{ccccc}
            c & A_1 & A_2 & \cdots & A_p \\
            1  &  2 &  3 &  \cdots &  p+1
          \end{array} \right) \textrm{ and } \left(\begin{array}{cccccccc}
            2  &  3   & \cdots &  p+1 & p+2  \\
            b_1 & b_2 & \cdots & b_p & b_p+1
          \end{array} \right);\end{equation}
 if $c< \min A_1$ and $b_p=n$ (notice that since $\textnormal{im }\alpha$ is convex and $|\textnormal{im }\alpha|\leq n-2$, then $b_p=n$ would imply that $b_1>2$), then define $\beta$ and $\gamma$, respectively as:
 \begin{equation}\label{m2}
        \left(\begin{array}{ccccc}
            c & A_1 & A_2 & \cdots & A_p \\
            2  &  3 &  4 &  \cdots &  p+2
          \end{array} \right)
\textrm{ and }\left(\begin{array}{cccccccc}
            1  &  3   & 4 & \cdots &  p+2   \\
            b_1-1 & b_1& b_2 & \cdots & b_p
          \end{array} \right);\end{equation}
if $\max A_i<c< \min A_{i+1}$ ($i\in\{1,2,\ldots,p-1\}$) and $b_p<n$, then define $\beta$ and $\gamma$, respectively as:
 \begin{eqnarray}\label{m3}
        \left(\begin{array}{ccccccc}
            A_1  & \cdots & A_i& c & A_{i+1} & \cdots & A_p \\
            1  & \cdots &  i & i+1 &  i+2  & \cdots &  p+1
          \end{array} \right)
\textrm{ and }  \nonumber \\ \left(\begin{array}{ccccccc}
            1   & \cdots &  i &  i+2  & \cdots &  p+1 & p+2  \\
            b_1  & \cdots & b_i& b_{i+1} & \cdots & b_p & b_p+1
          \end{array} \right);\end{eqnarray}
if $\max A_i<c< \min A_{i+1}$ ($i\in\{1,2,\ldots,p-1\}$) and $b_p=n$, then define $\beta$ and $\gamma$, respectively as:
 \begin{eqnarray}\label{m4}
        \left(\begin{array}{ccccccc}
            A_1  & \cdots & A_i& c & A_{i+1} & \cdots & A_p \\
            2  & \cdots &  i+1 & i+2 &  i+3  & \cdots &  p+2
          \end{array} \right)
\textrm{ and }\nonumber \\\left(\begin{array}{ccccccc}
            1  &  2  &  \cdots &  i+1 &  i+3  & \cdots & p+2  \\
            b_1-1 & b_1 & \cdots & b_i& b_{i+1} & \cdots & b_p
          \end{array} \right);\end{eqnarray}

  if $\min A_i<c< \max A_{i}$ ($i\in\{1,2,\ldots,p\}$) and $b_p<n$, then define $\beta$ and $\gamma$, respectively as:
 \begin{eqnarray}\label{m31}
        \left(\begin{array}{ccccccc}
            A_1  & \cdots & \{\min A_i,\ldots,c-1\}& \{c+1,\ldots,\max A_i\}& A_{i+1} & \cdots & A_p \\
            1  & \cdots &  i & i+1 & i+2 & \cdots &  p+1
          \end{array} \right)
\textrm{ and }  \nonumber \\ \left(\begin{array}{cccccc}
            1   & \cdots &  \{i,i+1\}  & \cdots &  p+1 & p+2  \\
            b_1  & \cdots & b_i  & \cdots & b_p & b_p+1
          \end{array} \right);\end{eqnarray}
if $\min A_i<c< \max A_{i}$ ($i\in\{1,2,\ldots,p\}$) and $b_p=n$, then define $\beta$ and $\gamma$, respectively as:
 \begin{eqnarray}\label{m41}
        \left(\begin{array}{cccccc}
            A_1  & \cdots & \{\min A_i,\ldots,c-1\}& \{c+1,\ldots,\max A_i\}  & \cdots & A_p \\
            2  & \cdots &  i+1 & i+2  & \cdots &  p+2
          \end{array} \right)
\textrm{ and }  \nonumber \\ \left(\begin{array}{ccccccc}
            1  & 2  & \cdots &  \{i+1,i+2\} & i+3  & \cdots  & p+2  \\
            b_1-1 & b_1  & \cdots & b_i  & b_{i+1}& \cdots & b_p
          \end{array} \right);\end{eqnarray}
if $\max A_p<c$ and $b_p<n$, then define $\beta$ and $\gamma$, respectively as:
 \begin{equation}\label{m5}
        \left(\begin{array}{ccccc}
         A_1 & A_2 & \cdots & A_p & c \\
         1  &  2  &  \cdots & p&  p+1
          \end{array} \right) \textrm{ and } \left(\begin{array}{cccccccc}
            1  &  2   & \cdots &  p & p+2  \\
            b_1 & b_2 & \cdots & b_p & b_p+1
          \end{array} \right);\end{equation}
 and if $\max A_p <c$ and $b_p=n$, then define $\beta$ and $\gamma$, respectively as:
 \begin{equation}\label{m6}
        \left(\begin{array}{ccccc}
         A_1 & A_2 & \cdots & A_p & c\\
          2  &  3 &  \cdots &  p+1& p+2
          \end{array} \right)
\textrm{ and }\left(\begin{array}{cccccccc}
            1  &  2  & 3 & \cdots &  p+1   \\
            b_1-1 & b_1& b_2 & \cdots & b_p
          \end{array} \right).\end{equation}
           Then, it is not difficult to verify that $\beta,\gamma\in K_{p+1}$ in all the cases and $\beta\gamma=\alpha$.

{\bf Case II:} Suppose $\textnormal{im }\alpha$ is not convex, i. e., there exists $i\in\{1,2,\ldots,p-1\}$ such that $b_{i+1}-b_i\geq 2$. Then, since $\alpha$ is a contraction, we must have $\min A_{i+1}-\max A_i\geq 2$ . Define
$$\beta=\left(\begin{array}{cccccccc}
            A_1 & A_2 & \cdots & A_i& \max A_i +1 & A_{i+1} & \cdots & A_p \\
            b_1 & b_2 & \cdots & b_i& b_i+1 & b_{i+1} & \cdots & b_p
 \end{array} \right).$$ Recall that $|\textnormal{im }\alpha|\leq n-2$, therefore $|\textnormal{im }\beta|\leq n-1$. Let $x\in [n]\backslash \textnormal{im }\beta$ and let $A=\textnormal{im }\beta \cup \{x\}$. Define $\gamma$ to be the partial identity on $A$. Then, it is easy to see that $\beta$ and $\gamma$ are in $K_{p+1}$ and $\alpha=\beta\gamma$.
\end{proof}

\begin{corollary}\label{cjr} $\langle K_{n-1}\rangle=\mathcal{OCP}_{n,n-1}$ for all $n\geq 3$.
\end{corollary}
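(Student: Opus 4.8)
The plan is to deduce the equality $\langle K_{n-1}\rangle=\mathcal{OCP}_{n,n-1}$ formally from Lemma~\ref{jr} together with the elementary observation that composition never enlarges the image. First I would establish the inclusion $\langle K_{n-1}\rangle\subseteq\mathcal{OCP}_{n,n-1}$. Since $|\textnormal{im }(\alpha\beta)|\leq|\textnormal{im }\alpha|$ for all $\alpha,\beta\in\mathcal{OCP}_n$, the set $\mathcal{OCP}_{n,n-1}$ is a subsemigroup (in fact a two-sided ideal) of $\mathcal{OCP}_n$; as $K_{n-1}\subseteq\mathcal{OCP}_{n,n-1}$ by definition, the subsemigroup generated by $K_{n-1}$ is contained in $\mathcal{OCP}_{n,n-1}$.

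For the reverse inclusion I would iterate Lemma~\ref{jr}. For each $p$ with $1\leq p\leq n-2$, Lemma~\ref{jr} gives $K_p\subseteq\langle K_{p+1}\rangle$; applying the (monotone, idempotent) operator $\langle\,\cdot\,\rangle$ yields $\langle K_p\rangle\subseteq\langle\langle K_{p+1}\rangle\rangle=\langle K_{p+1}\rangle$. Chaining these inclusions over $p=1,\dots,n-2$ gives $\langle K_1\rangle\subseteq\langle K_2\rangle\subseteq\cdots\subseteq\langle K_{n-1}\rangle$, so $K_p\subseteq\langle K_p\rangle\subseteq\langle K_{n-1}\rangle$ for every $p$ with $1\leq p\leq n-1$. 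Since every $\alpha\in\mathcal{OCP}_{n,n-1}$ has $|\textnormal{im }\alpha|=p$ for some $p\in\{1,\dots,n-1\}$, we obtain $\mathcal{OCP}_{n,n-1}=\bigcup_{p=1}^{n-1}K_p\subseteq\langle K_{n-1}\rangle$. (If one also admits the empty map as an element, it is covered as well: the constant map onto $1$ lies in $K_1\subseteq\langle K_{n-1}\rangle$, and composing it with the injective shift $i\mapsto i-1$ on $\{2,\dots,n\}$, which lies in $K_{n-1}$, produces the empty map.) Combining the two inclusions gives the statement, and one sees that $n\geq3$ is precisely what makes Lemma~\ref{jr} applicable across the whole range $1\leq p\leq n-2$.

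There is essentially no obstacle here: all the substance is contained in Lemma~\ref{jr}, and the corollary is a routine closure argument. The only two points that deserve an explicit line are that $\mathcal{OCP}_{n,n-1}$ is closed under products (immediate from $|\textnormal{im }(\alpha\beta)|\leq|\textnormal{im }\alpha|$) and that the set-level containment $K_p\subseteq\langle K_{p+1}\rangle$ upgrades to the subsemigroup-level containment $\langle K_p\rangle\subseteq\langle K_{p+1}\rangle$ (immediate, since $\langle K_{p+1}\rangle$ is a subsemigroup containing $K_p$ and hence contains everything $K_p$ generates).
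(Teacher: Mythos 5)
Your proof is correct and is essentially the paper's argument, merely written out in full: the paper's one-line proof cites Lemma~\ref{jr} together with the containment $\langle K_{n-1}\rangle\leq\mathcal{OCP}_{n,n-1}$, which is exactly the chain of inclusions and the ideal-closure observation you make explicit. Your aside on the empty map is an extra care the paper does not bother with, but it changes nothing.
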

\begin{proof} It follows Lemma~\ref{jr} and the fact that $\langle K_{n-1}\rangle \leq\mathcal{OCP}_{n,n-1}$.
\end{proof}

An element $\alpha$ in $\mathcal{OCP}_n$ (as well as $\mathcal{ORCP}_n$) is said to have a \emph{projection characteristic} $(k,p)$ or belong to the set $[k,p]$ if $|\textnormal{dom }\alpha|=k$ and $|\textnormal{im }\alpha|=p$ (see \cite{gu2}). Now, let $\alpha \in K_{n-1}$ where $n\geq 3$. Then, $|\textnormal{dom }\alpha|$ is either $n$ or $n-1$, i. e., either $\alpha$ belongs to $[n,n-1]$ or $[n-1,n-1]$. Clearly if $\alpha\in [n,n-1]$ then $\alpha$ is a full contraction and its kernel partition is of the form $\{ \{1\},\{2\},\ldots,\{i,i+1\},\ldots,\{n-1\},\{n\}\}$ ($1\leq i\leq n-1$). On the other hand, if $\alpha\in [n-1,n-1]$, then $\alpha$ is one-to-one, and so kp$(\alpha)$ is just a partition of $[n]\backslash \{i\}$ ($i=1,2,\ldots,n$) into singleton classes.

Moreover, if $\alpha\in K_{n-1}$ and $\textnormal{dom }\alpha$ is convex, then by contractive property of $\alpha$, $\textnormal{im }\alpha$ must also be convex (see also Ali \emph{et. al.,}~\cite{zb1}). Therefore, $\textnormal{im }\alpha$ will either be $\{1,2,\ldots,n-1\}$ or $\{2,3,\ldots,n\}$. But if $\textnormal{dom }\alpha$ is not convex, i. e., if kernel partition of $\alpha$ is of the form $\{\{1\},\ldots,\{j-1\},\{j+1\},\{j+2\},\ldots,\{n\}\}$ ($2\leq j\leq n-1$), then $\textnormal{dom }\alpha$ has three possible choice of images: $\{1,2,\ldots,n-1\}$, $\{2,3,\ldots,n\}$ or $\{1,\ldots,j-1,j+1,\ldots,n\}$.

For $1\leq s,t\leq n$ and $1\leq i\leq n-1$, denote $P_{[n]\backslash \{s\}}$ to be a partition of $[n]\backslash \{s\}$ into singleton classes; and $P_{\{i, i+1\}}$ to be a partition of $[n]$ into $n-1$ classes as $\{ \{1\},\ldots,\{i-1\},\{i,i+1\},\{i+2\},\ldots,\{n\}\}$. Also, denote $\alpha_{s,t}$ to mean an element in $[n-1,n-1]$ whose kernel partition is $P_{[n]\backslash \{s\}}$  and image, $[n]\backslash \{t\}$. Further, denote $\pi_{(i,i+1),k}$ ($k=1,2$) to mean an element in $[n,n-1]$ whose kernel partition is $P_{\{i, i+1\}}$ and image, $[n]\backslash \{k\}$. For the purpose of illustration, consider the transformations below.\\
  $\alpha_{n,1}=\left(\begin{array}{cccc}
         1 & 2 & \cdots & n-1 \\
          2  &  3 &  \cdots &  n
          \end{array} \right)$, $\alpha_{2,2}=\left(\begin{array}{ccccc}
           1  &  3 & 4 & \cdots &  n\\
          1 & 3 & 4& \cdots & n
          \end{array} \right)$ and $\pi_{(2,3),1}= \left(\begin{array}{cccc}
         1 & \{2,3\} & \cdots & n \\
          2  &  3 &  \cdots &  n
          \end{array} \right)$.

 Then, we have a tabular display of the elements of $K_{n-1}$ below, where the horizontal rows represent the $\mathcal{R}^*-$classes and vertical columns represent the $\mathcal{L}^*-$classes.
\begin{equation}
\begin{tabular}{|c|c|c|c|c||}
\hline
  {\bf Kernel partitions / Image sets} &$[n]\backslash \{1\}$  & $[n]\backslash \{j\}$ & $[n]\backslash \{n\}$ \\
  \hline
$P_{[n]\backslash \{1\}}$ & $\alpha_{1,1}$ &&$\alpha_{1,n}$    \\ \hline
$P_{[n]\backslash \{j\}}$ & $\alpha_{j,1}$ & $\alpha_{j,j}$ &$\alpha_{j,n}$     \\ \hline
$P_{[n]\backslash \{n\}}$ & $\alpha_{n,1}$ && $\alpha_{n,n}$    \\ \hline
 $P_{\{i, i+1\}}$ & $\pi_{(i,i+1),1}$ & &$\pi_{(i,i+1),n}$   \\
  \hline
\end{tabular} \tag{1}\label{tab1}
\end{equation}
where $1\leq i\leq n-1$ and $2\leq j\leq n-1$.

It is not difficult to see that the number of rows in Table~\eqref{tab1} is $2n-1$. It therefore follows that $K_{n-1}$ has $2n-1$ $\mathcal{R}^*-$classes. Thus, we have the following lemma.

\begin{lemma}\label{lr} For $n\geq 3$, \textnormal{rank(}$\mathcal{OCP}_{n,n-1})\geq 2n-1$.
\end{lemma}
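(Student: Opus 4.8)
The plan is to show that every generating set of $\mathcal{OCP}_{n,n-1}$ must contain at least one element from each of the $2n-1$ $\mathcal{R}^*$-classes of $K_{n-1}$ exhibited in Table~\eqref{tab1}; since distinct rows of that table correspond to distinct kernel partitions, this immediately yields $\textnormal{rank}(\mathcal{OCP}_{n,n-1})\geq 2n-1$. In other words, $K_{n-1}$ is the top $\mathcal{D}^*$-class of the ideal $\mathcal{OCP}_{n,n-1}$ and its $\mathcal{R}^*$-classes cannot be reached from strictly below, so a representative of each must appear among the generators.

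First I would record the elementary fact that $h(\beta\gamma)\leq\min\{h(\beta),h(\gamma)\}$ for all $\beta,\gamma\in\mathcal{OCP}_n$: this holds because $\textnormal{im}(\beta\gamma)=(\textnormal{im }\beta\cap\textnormal{dom }\gamma)\gamma$ is contained in $\textnormal{im }\gamma$ and has size at most $|\textnormal{im }\beta\cap\textnormal{dom }\gamma|\leq|\textnormal{im }\beta|$. The key lemma is then: if $h(\beta\gamma)=h(\beta)$, then $\textnormal{ker}(\beta\gamma)=\textnormal{ker }\beta$. Indeed, the equality $h(\beta\gamma)=h(\beta)$ forces $|\textnormal{im }\beta\cap\textnormal{dom }\gamma|=|\textnormal{im }\beta|$, hence $\textnormal{im }\beta\subseteq\textnormal{dom }\gamma$ (so $\textnormal{dom}(\beta\gamma)=\textnormal{dom }\beta$) and $\gamma$ is injective on $\textnormal{im }\beta$; consequently, for $x,y\in\textnormal{dom }\beta$ we have $x\beta\gamma=y\beta\gamma$ if and only if $x\beta=y\beta$, which is precisely $\textnormal{ker}(\beta\gamma)=\textnormal{ker }\beta$. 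By Theorem~\ref{zbt}(ii), this says $\beta\,\mathcal{R}^*\,\beta\gamma$.

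Now let $A$ be any generating set of $\mathcal{OCP}_{n,n-1}$ and fix $\alpha\in K_{n-1}$. Write $\alpha=a_1a_2\cdots a_k$ with each $a_i\in A$. Since $a_i\in\mathcal{OCP}_{n,n-1}$ we have $h(a_i)\leq n-1$, while $n-1=h(\alpha)\leq h(a_i)$ by the inequality above; hence every $a_i$ lies in $K_{n-1}$. Applying the key lemma with $\beta=a_1$ and $\gamma=a_2\cdots a_k$ (the case $k=1$ being trivial), and using $h(a_1)=n-1=h(\alpha)$, we conclude $\textnormal{ker }a_1=\textnormal{ker }\alpha$; that is, $a_1$ lies in the $\mathcal{R}^*$-class of $K_{n-1}$ determined by the kernel partition of $\alpha$. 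Letting $\alpha$ run over representatives of the $2n-1$ distinct $\mathcal{R}^*$-classes of $K_{n-1}$ — one for each of the kernel partitions $P_{[n]\backslash\{s\}}$ ($1\leq s\leq n$) and $P_{\{i,i+1\}}$ ($1\leq i\leq n-1$), every one of which is realized by some element of $K_{n-1}$ as displayed in Table~\eqref{tab1} — produces $2n-1$ pairwise distinct elements of $A$. Therefore $|A|\geq 2n-1$, which gives the claimed bound.

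The argument is the standard "top $\mathcal{D}^*$-class is untouchable from below" principle, so there is no serious obstacle; the only points requiring care are (a) the domain bookkeeping in the key lemma, where one must genuinely use that $h(\beta\gamma)=h(\beta)$ forces $\textnormal{im }\beta\subseteq\textnormal{dom }\gamma$ and not merely injectivity of $\gamma$ on some subset, and (b) confirming that all $2n-1$ kernel partitions are actually occupied inside $K_{n-1}$ by order-preserving contraction mappings of image size exactly $n-1$, which is exactly the verification already carried out in Table~\eqref{tab1}.
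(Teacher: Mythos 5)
Your proof is correct and follows essentially the same route as the paper: the paper simply asserts that any generating set must meet every $\mathcal{R}^*$-class of $K_{n-1}$ and counts $2n-1$ such classes from Table~\eqref{tab1}, while you supply the supporting details (the inequality $h(\beta\gamma)\leq\min\{h(\beta),h(\gamma)\}$ and the fact that $h(\beta\gamma)=h(\beta)$ forces $\ker(\beta\gamma)=\ker\beta$). Your write-up is a more complete version of the same argument.
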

\begin{proof} Since any generating set of $\langle K_{n-1}\rangle$ must contain at least one representative from each of the $\mathcal{R}^*-$classes, it follows that rank($\langle K_{n-1}\rangle)\geq 2n-1$ which implies by Corollary~\ref{cjr} that \textnormal{rank(}$\mathcal{OCP}_{n,n-1})\geq 2n-1$.
\end{proof}

\begin{proposition}\label{gs1} Let $ G=\{\alpha_{n,1},\alpha_{1,n}, \alpha_{j,j},\pi_{\{i,i+1\},n}: 2\leq j\leq n-1,\, 1\leq i\leq n-1 \}$. Then, $G$ is a minimal generating set of $\langle K_{n-1}\rangle$.
\end{proposition}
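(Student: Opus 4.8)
The plan is to establish $\langle G\rangle=\langle K_{n-1}\rangle$ together with the equality $|G|=2n-1$; minimality then comes for free. Indeed, $G$ is built from the single map $\alpha_{n,1}$, the single map $\alpha_{1,n}$, the $n-2$ partial identities $\alpha_{j,j}$ with $2\le j\le n-1$, and the $n-1$ maps $\pi_{(i,i+1),n}$ with $1\le i\le n-1$, so $|G|=1+1+(n-2)+(n-1)=2n-1$; and by Lemma~\ref{lr} and Corollary~\ref{cjr} every generating set of $\mathcal{OCP}_{n,n-1}=\langle K_{n-1}\rangle$ has at least $2n-1$ elements, so once $G$ is shown to generate $\langle K_{n-1}\rangle$ it is automatically a minimum, hence minimal, generating set.

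Since $G\subseteq K_{n-1}$, the inclusion $\langle G\rangle\subseteq\langle K_{n-1}\rangle$ is trivial, and the substance of the argument is the reverse inclusion. By Corollary~\ref{cjr} it suffices to write every element of $K_{n-1}$ as a product of members of $G$, and the classification recorded in Table~\eqref{tab1} tells us exactly which elements must be handled. Four of the families ($\alpha_{n,1}$, $\alpha_{1,n}$, the $\alpha_{j,j}$, and the $\pi_{(i,i+1),n}$) are already in $G$, and I would obtain the remaining ones from the factorisations $\alpha_{1,1}=\alpha_{1,n}\alpha_{n,1}$, $\alpha_{n,n}=\alpha_{n,1}\alpha_{1,n}$, $\pi_{(i,i+1),1}=\pi_{(i,i+1),n}\,\alpha_{n,1}$ $(1\le i\le n-1)$, $\alpha_{j,n}=\alpha_{j,j}\,\pi_{(j,j+1),n}$ $(2\le j\le n-1)$, and $\alpha_{j,1}=\alpha_{j,n}\,\alpha_{n,1}$ $(2\le j\le n-1)$. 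Each of these identities is confirmed by one short computation: using $\mathrm{dom}(\beta\gamma)=\{x\in\mathrm{dom}\,\beta:x\beta\in\mathrm{dom}\,\gamma\}$ one checks that the composite has the prescribed domain, kernel partition and image, and then that it acts on each point as the target map does. Granting these, $K_{n-1}\subseteq\langle G\rangle$, so $\langle G\rangle=\langle K_{n-1}\rangle=\mathcal{OCP}_{n,n-1}$, and the minimality count above completes the proof.

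The step I expect to be the main obstacle is not any single hard idea but the careful bookkeeping for composition of partial maps: because a composite can lose points from the domain, one must verify at each factorisation that it does not — e.g. in $\alpha_{j,j}\,\pi_{(j,j+1),n}$ this uses that $\alpha_{j,j}$ is the partial identity on $[n]\setminus\{j\}$ while $\pi_{(j,j+1),n}$ is total, and in $\pi_{(i,i+1),n}\,\alpha_{n,1}$ and $\alpha_{j,n}\,\alpha_{n,1}$ it uses that $\mathrm{im}\,\pi_{(i,i+1),n}=\mathrm{im}\,\alpha_{j,n}=[n]\setminus\{n\}=\mathrm{dom}\,\alpha_{n,1}$. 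One must likewise keep the index ranges consistent, in particular that $\pi_{(j,j+1),n}$ genuinely belongs to $G$, which needs $j\le n-1$ and holds throughout $2\le j\le n-1$. Finally the argument leans implicitly on the completeness of Table~\eqref{tab1} as an enumeration of $K_{n-1}$, which was justified before the table from the convexity of images of contractions on convex domains and which I would simply invoke.
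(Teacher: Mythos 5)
Your proposal is correct and follows essentially the same route as the paper: the same five-family factorisation of the missing entries of Table~\eqref{tab1} (your $\alpha_{j,1}=\alpha_{j,n}\alpha_{n,1}$ in place of the paper's $\alpha_{j,1}=\alpha_{j,j}\pi_{(j,j+1),1}$ is an inessential variant, and both are valid), combined with the $\mathcal{R}^*$-class lower bound of Lemma~\ref{lr} for minimality. The domain-compatibility checks you flag are exactly the points that make the compositions total on the intended domains, so nothing further is needed.
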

\begin{proof} Firstly, it is not difficult to see from the following multiplications,
\begin{equation*}
     \alpha_{1,n}\cdot\alpha_{n,1}= \left(\begin{array}{cccc}
         2 & 3  &\cdots & n\\
         1 & 2 & \cdots & n-1
          \end{array} \right) \left(\begin{array}{cccc}
         1 & 2  &\cdots & n-1\\
          2 & 3 & \cdots & n
          \end{array} \right)= \left(\begin{array}{cccc}
         2 & 3  &\cdots & n \\
         2 & 3 & \cdots & n
          \end{array} \right) =\alpha_{1,1}; \end{equation*}
           \begin{equation*}
         \alpha_{n,1}\cdot \alpha_{1,n}=  \left(\begin{array}{cccc}
         1 & 2  &\cdots & n -1\\
         2 & 3 & \cdots & n
          \end{array} \right) \left(\begin{array}{cccc}
         2 & 3  &\cdots & n\\
          1  & 2 & \cdots & n-1
          \end{array} \right)= \left(\begin{array}{cccc}
         1 & 2  &\cdots & n-1\\
         1 & 2 & \cdots & n-1
          \end{array} \right)= \alpha_{n,n}; \end{equation*}
           \begin{equation*}
         \pi_{\{i,i+1\},n}\cdot\alpha_{n,1}= \left(\begin{array}{ccccccc}
          1  & \cdots & i-1 & \{i,i+1\}& i+2 &\cdots & n \\
          1 & \cdots &  i-1 & i & i+1 & \cdots  & n-1
          \end{array} \right)\left(\begin{array}{cccc}
         1 & 2  &\cdots & n-1\\
          2 & 3 & \cdots & n
          \end{array} \right) \end{equation*}
          \begin{equation*}= \left(\begin{array}{ccccccc}
          1  & \cdots & i-1 & \{i,i+1\}& i+2 &\cdots & n \\
          2 & \cdots &  i & i+1 & i+2 & \cdots  & n
          \end{array} \right)= \pi_{\{i,i+1\},1};
         \end{equation*}
         \begin{equation*}
          \alpha_{j,j}\cdot\pi_{\{j,j+1\},n}= \left(\begin{array}{cccccc}
         1 & \cdots & j-1 & j+1 &\cdots & n\\
         1 & \cdots & j-1 & j+1 & \cdots & n
          \end{array} \right)  \left(\begin{array}{ccccccc}
          1  & \cdots & \{j,j+1\}& j+2 &\cdots & n \\
          1 & \cdots  & j & j+1 & \cdots  & n-1
          \end{array} \right)  \end{equation*}
          \begin{equation*}
         = \left(\begin{array}{cccccc}
         1 & \cdots & j-1 & j+1 &\cdots & n\\
         1 & \cdots & j-1 & j & \cdots & n-1
          \end{array} \right) =\alpha_{j,n}; \textnormal{ and} \end{equation*}
          \begin{equation*}
          \\ \alpha_{j,j}\cdot\pi_{\{j,j+1\},1}= \left(\begin{array}{cccccc}
         1 & \cdots & j-1 & j+1 &\cdots & n\\
         1 & \cdots & j-1 & j+1 & \cdots & n
          \end{array} \right)  \left(\begin{array}{cccccc}
          1  & \cdots  & \{j,j+1\}& j+2 &\cdots & n \\
          2 & \cdots  & j+1 & j+2 & \cdots  & n
          \end{array} \right) \end{equation*}
          \begin{equation*}
          = \left(\begin{array}{cccccc}
         1 & \cdots & j-1 & j+1 &\cdots & n\\
         2 & \cdots & j & j+1 & \cdots & n
          \end{array} \right) =\alpha_{j,1};
\end{equation*}            
 that $ K_{n-1}\subseteq \langle G\rangle$ and as such, since $G\subseteq K_{n-1}$, then $\langle K_{n-1}\rangle = \langle G\rangle$.

  Secondly, we notice that elements of $G$ are just representatives of the $\mathcal{R}^*-$classes in $K_{n-1}$. Therefore, $G$ is a minimal generating set of $\langle K_{n-1}\rangle$.
\end{proof}

\begin{corollary}\label{tr} For $n\geq3$, \textnormal{rank(}$\mathcal{OCP}_{n,n-1})=2n-1$.
\end{corollary}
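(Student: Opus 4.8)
The plan is simply to pinch \textnormal{rank}$(\mathcal{OCP}_{n,n-1})$ between the lower bound already established in Lemma~\ref{lr} and an upper bound read off from the generating set exhibited in Proposition~\ref{gs1}. Since all of the substantive work has been done in Lemma~\ref{jr}, Corollary~\ref{cjr}, Lemma~\ref{lr} and Proposition~\ref{gs1}, this final step should be essentially bookkeeping.

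First I would invoke Corollary~\ref{cjr} to identify $\langle K_{n-1}\rangle$ with $\mathcal{OCP}_{n,n-1}$, so that the set $G$ of Proposition~\ref{gs1}, being a generating set of $\langle K_{n-1}\rangle$, is in fact a generating set of $\mathcal{OCP}_{n,n-1}$. Next I would count $|G|$: it consists of the two maps $\alpha_{n,1}$ and $\alpha_{1,n}$, the $n-2$ maps $\alpha_{j,j}$ with $2\le j\le n-1$, and the $n-1$ maps $\pi_{\{i,i+1\},n}$ with $1\le i\le n-1$, giving $|G| = 2 + (n-2) + (n-1) = 2n-1$. Hence \textnormal{rank}$(\mathcal{OCP}_{n,n-1}) \le |G| = 2n-1$.

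Combining this with Lemma~\ref{lr}, which gives \textnormal{rank}$(\mathcal{OCP}_{n,n-1}) \ge 2n-1$ for $n\ge 3$, yields \textnormal{rank}$(\mathcal{OCP}_{n,n-1}) = 2n-1$, as claimed. The only points requiring a moment's care are that the index ranges $2\le j\le n-1$ and $1\le i\le n-1$ are exactly as used in Proposition~\ref{gs1} (so the count is not off by one), and that $n\ge 3$ is precisely the hypothesis under which both Lemma~\ref{lr} and Proposition~\ref{gs1} apply; I do not anticipate any genuine obstacle here.
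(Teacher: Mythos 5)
Your proof is correct and follows essentially the same route as the paper: the paper likewise reads off $|G|=2n-1$ from Proposition~\ref{gs1} and transfers the rank to $\mathcal{OCP}_{n,n-1}$ via Corollary~\ref{cjr} (with the lower bound already embedded in the minimality claim of Proposition~\ref{gs1}, which rests on the same $\mathcal{R}^*$-class count as Lemma~\ref{lr}). Your explicit sandwich between Lemma~\ref{lr} and the cardinality of $G$ is just a slightly more spelled-out version of the same bookkeeping.
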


\begin{proof} It is easy to see from Proposition~\ref{gs1} that the cardinality of the minimal generating set $G$, of $K_{n-1}$ is $2n-1$. Therefore, $\textnormal{rank}(\langle K_{n-1}\rangle)=2n-1$ which implies by Corollary~\ref{cjr} that $\textnormal{rank}(\mathcal{OCP}_{n,n-1})=2n-1$.
\end{proof}

Now we have the main result of this section.

\begin{theorem}\label{cr}
Let $\mathcal{OCP}_n$ be as defined in equation~\eqref{ocp}. Then, the rank of $\mathcal{OCP}_n$ is $2n$ for all $n\geq 3$.
\end{theorem}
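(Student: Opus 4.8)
The plan is to build $\mathcal{OCP}_n$ from the ideal $\mathcal{OCP}_{n,n-1}$, whose rank is already known to be $2n-1$ by Corollary~\ref{tr}. First I would describe how the two pieces fit together: $\mathcal{OCP}_n=\mathcal{OCP}_{n,n-1}\cup K_n$, and I claim $K_n=\{1_{[n]}\}$. Indeed, if $\alpha\in\mathcal{OCP}_n$ has $|\textnormal{im }\alpha|=n$ then $\textnormal{im }\alpha=[n]$, so $\alpha$ is injective with $|\textnormal{dom }\alpha|=n$, i.e.\ $\textnormal{dom }\alpha=[n]$, making $\alpha$ an order-preserving bijection of the chain $[n]$; hence $\alpha=1_{[n]}$, the identity of $\mathcal{OCP}_n$. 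Since $|\textnormal{im }(\alpha\beta)|\leq\min\{|\textnormal{im }\alpha|,|\textnormal{im }\beta|\}$ for all $\alpha,\beta\in\mathcal{OCP}_n$, the set $\mathcal{OCP}_{n,n-1}$ is a (two-sided) ideal of $\mathcal{OCP}_n$, and it is proper since it omits $1_{[n]}$.

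For the upper bound, let $G$ be the generating set of Proposition~\ref{gs1}, so that $\langle G\rangle=\langle K_{n-1}\rangle=\mathcal{OCP}_{n,n-1}$ by Corollary~\ref{cjr}, with $|G|=2n-1$. Put $G'=G\cup\{1_{[n]}\}$. Then $\langle G'\rangle$ contains $\mathcal{OCP}_{n,n-1}$ and contains $1_{[n]}$, hence equals $\mathcal{OCP}_n=\mathcal{OCP}_{n,n-1}\cup\{1_{[n]}\}$. Therefore $\textnormal{rank}(\mathcal{OCP}_n)\leq|G'|=2n$.

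For the lower bound, let $A$ be any generating set of $\mathcal{OCP}_n$. I would first show $1_{[n]}\in A$: if $1_{[n]}=a_1a_2\cdots a_k$ with all $a_i\in A$, then $n=|\textnormal{im }1_{[n]}|\leq|\textnormal{im }a_i|$ for every $i$, so each $a_i\in K_n=\{1_{[n]}\}$; thus $1_{[n]}$ admits no nontrivial factorization and must belong to $A$. Now set $B=A\setminus\{1_{[n]}\}$; every element of $B$ has image of size at most $n-1$, so $B\subseteq\mathcal{OCP}_{n,n-1}$. Given any $\gamma\in\mathcal{OCP}_{n,n-1}$, write $\gamma=a_1\cdots a_k$ with $a_i\in A$ and delete from this product every factor equal to $1_{[n]}$; because $1_{[n]}$ is a two-sided identity the product is unchanged, and the remaining product is nonempty (otherwise $\gamma=1_{[n]}\notin\mathcal{OCP}_{n,n-1}$), so $\gamma\in\langle B\rangle$. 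Hence $\langle B\rangle=\mathcal{OCP}_{n,n-1}$, giving $|A|=|B|+1\geq\textnormal{rank}(\mathcal{OCP}_{n,n-1})+1=2n$ by Corollary~\ref{tr}. Combining the two bounds yields $\textnormal{rank}(\mathcal{OCP}_n)=2n$.

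Most of this is routine bookkeeping once Corollary~\ref{tr} is in hand; the only step requiring genuine care is the lower bound, namely checking that stripping the identity out of a generating set of $\mathcal{OCP}_n$ still leaves a generating set of the ideal $\mathcal{OCP}_{n,n-1}$. This relies precisely on $1_{[n]}$ being a two-sided identity together with $\mathcal{OCP}_{n,n-1}$ being a proper ideal, so that no factorization of an ideal element can be composed solely of copies of $1_{[n]}$.
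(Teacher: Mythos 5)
Your proposal is correct and follows the same route as the paper: decompose $\mathcal{OCP}_n$ as $\mathcal{OCP}_{n,n-1}\cup\{1_{[n]}\}$, use the fact that $\mathcal{OCP}_{n,n-1}$ is a proper ideal, and invoke Corollary~\ref{tr}. The paper states this in three sentences and leaves the upper/lower bound bookkeeping implicit; your write-up simply supplies those details (in particular the argument that any generating set must contain $1_{[n]}$ and that deleting it still generates the ideal), which is a welcome elaboration rather than a different proof.
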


\begin{proof} First, observe that the only element of $\mathcal{OCP}_n$ of height $n$ is the identity element, $1_{[n]}$. It thus follows that $\mathcal{OCP}_n= \mathcal{OCP}_{n,n-1}\cup \{1_{[n]}\}$. Notice also that since $\mathcal{OCP}_{n,n-1}$ is an ideal of $\mathcal{OCP}_n$, then rank$(\mathcal{OCP}_n)= 2n$, as required
\end{proof}


\section{Rank of $\mathcal{ORCP}_n$}
   In this section, we extend the results obtained in section 3 to compute the rank of the semigroup of order-preserving or order-reversing partial contraction mappings, $\mathcal{ORCP}_n$.

    Let
\begin{equation}\label{kp} W_p=\{\alpha \in \mathcal{ORCP}_n: |\textnormal{im }\alpha|=p\}\end{equation}
    and
 \begin{equation}\label{id2} \mathcal{ORCP}_{n,p}=\{\alpha \in \mathcal{ORCP}_n: |\textnormal{im }\alpha|\leq p\}, \end{equation}
where $1\leq p\leq n$. Observe that $K_p$ as defined in equation~\eqref{kp} contain all the order-preserving elements in $W_p$. Let $K_p^*$ denote the set of all order-reversing elements in $W_p$. Then, $W_p= K_p\cup K_p^*$. Now we have the following lemma which is an extension of Lemma~\ref{jr}.
\begin{lemma}\label{jp} For $1\leq p\leq n-2$, $W_p\subseteq \langle W_{p+1}\rangle$.
\end{lemma}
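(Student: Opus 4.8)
The plan is to bootstrap off Lemma~\ref{jr}. Write $W_p = K_p \cup K_p^*$, where $K_p$ consists of the order-preserving elements and $K_p^*$ of the order-reversing elements of $\mathcal{ORCP}_n$ of height $p$. Since $K_{p+1}\subseteq W_{p+1}$, Lemma~\ref{jr} immediately gives $K_p \subseteq \langle K_{p+1}\rangle \subseteq \langle W_{p+1}\rangle$, so the whole content of the lemma is to show that an arbitrary order-reversing $\alpha \in K_p^*$ lies in $\langle W_{p+1}\rangle$.

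For that I would use the order-reversing isometry $\theta$ on $[n]$ given by $x\theta = n+1-x$, which plainly belongs to $\mathcal{ORCP}_n$. Given $\alpha \in K_p^*$, set $\beta = \alpha\theta$. Because $\theta$ is a bijective isometry, $\beta$ is a partial contraction with $\textnormal{dom}\,\beta = \textnormal{dom}\,\alpha$ and $|\textnormal{im}\,\beta| = |\textnormal{im}\,\alpha| = p$; and since $\alpha$ and $\theta$ both reverse the order, $\beta$ preserves it, so $\beta \in K_p$. As $\theta^2 = 1_{[n]}$ we have $\alpha = \beta\theta$. Now trim the right factor: since $p \leq n-2$ we may pick $x_0 \in [n]\setminus\textnormal{im}\,\beta$ and set $D = \textnormal{im}\,\beta \cup \{x_0\}$ and $\theta' = \theta|_D$. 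Then $\theta'$, a restriction of an order-reversing isometry to a $(p+1)$-element set, lies in $K_{p+1}^* \subseteq W_{p+1}$, while $\beta\theta' = \beta\theta = \alpha$ because $\textnormal{dom}\,\theta' = D \supseteq \textnormal{im}\,\beta$. Finally $\beta \in K_p \subseteq \langle W_{p+1}\rangle$ by Lemma~\ref{jr}, hence $\alpha = \beta\theta' \in \langle W_{p+1}\rangle$. Together with $K_p \subseteq \langle W_{p+1}\rangle$ this yields $W_p \subseteq \langle W_{p+1}\rangle$.

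This route has no serious obstacle; the only points needing (routine) verification are that $\beta = \alpha\theta$ is an order-preserving contraction of height $p$, and that replacing $\theta$ by the restriction $\theta'$ changes neither the product $\beta\theta$ nor the fact that the second factor now has height exactly $p+1$ — both transparent once one notes $\textnormal{dom}\,\theta' \supseteq \textnormal{im}\,\beta$ and $p\leq n-2$. If instead one wanted a proof paralleling Lemma~\ref{jr} directly and invoking Lemma~\ref{tk2}, one would repeat its case analysis for order-reversing $\alpha$: split on whether $\textnormal{im}\,\alpha$ is convex; in the convex case use Lemma~\ref{tk2} when $\textnormal{dom}\,\alpha = [n]$ and otherwise build explicit $\beta,\gamma$ around a point $c\in[n]\setminus\textnormal{dom}\,\alpha$; in the non-convex case fill a gap and compose with a partial identity. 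There the real burden is purely the bookkeeping of the many subcases, while keeping $\beta$ order-preserving and $\gamma$ order-reversing so that $\beta\gamma$ reverses the order. I would present the reflection argument, since it is far shorter, though it appeals to Lemma~\ref{jr} rather than directly to Lemma~\ref{tk2}.
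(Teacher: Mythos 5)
Your argument is correct, and it takes a genuinely different (and considerably shorter) route than the paper. The paper handles an order-reversing $\alpha\in K_p^*$ by redoing the entire case analysis of Lemma~\ref{jr}: it splits on whether $\textnormal{im }\alpha$ is convex, invokes Lemma~\ref{tk2} when $\textnormal{dom }\alpha=[n]$, and otherwise constructs explicit factorizations $\alpha=\beta\gamma$ for each of eight configurations of a point $c\in[n]\setminus\textnormal{dom }\alpha$ (before/after/inside/between blocks, crossed with whether the image touches $n$), plus a separate non-convex case. You instead conjugate by the reflection $\theta\colon x\mapsto n+1-x$: since $\theta$ is a total order-reversing isometry with $\theta^2=1_{[n]}$, the map $\beta=\alpha\theta$ is an order-preserving contraction of the same height and domain, so $\beta\in K_p\subseteq\langle K_{p+1}\rangle\subseteq\langle W_{p+1}\rangle$ by Lemma~\ref{jr}, and $\alpha=\beta\theta'$ where $\theta'$ is $\theta$ restricted to $\textnormal{im }\beta\cup\{x_0\}$, a set of size $p+1$ available because $p\leq n-2$; the verifications that $\beta$ is an order-preserving contraction of height $p$ and that $\theta'\in K_{p+1}^*$ with $\beta\theta'=\beta\theta=\alpha$ are all routine and hold as you state. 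What your approach buys is the complete elimination of the order-reversing case analysis and of the appeal to Lemma~\ref{tk2} (the reflection argument covers full-domain $\alpha$ just as well); what it costs is only the dependence on Lemma~\ref{jr}, which the paper's proof of this lemma also uses for the $K_p$ part, so nothing is lost. The one point worth making explicit in a write-up is that $\beta\theta'$ and $\beta\theta$ have the same domain, which follows from $\textnormal{im }\beta\subseteq\textnormal{dom }\theta'$ — you do note this.
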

\begin{proof} Let $\alpha=\left(\begin{array}{cccc}
            A_1 & A_2 & \ldots & A_p \\
            b_1 & b_2 & \ldots & b_p
          \end{array} \right)$
be element of $W_p$ where $1\leq p\leq n-2$. If $|\textnormal{dom }\alpha|=n$, then $\alpha$ is a full contraction and the result follows from Lemma~\ref{tk2}. Now, suppose $|\textnormal{dom }\alpha|<n$. If $\alpha\in K_p$, then the result follows from Lemma~\ref{jr}, and if $\alpha\in K_p^*$, then we proceed by cases similar to what we had in the proof of Lemma~\ref{jr}.

{\bf Case I:} If $\textnormal{im }\alpha$ is convex. Then, since $\alpha$ is order-reversing it can be expressed as\\
$\left(\begin{array}{cccc}
            A_1 & A_2 & \cdots & A_p \\
            k & k-1 & \cdots & k-p+1
          \end{array} \right)$ ($p\leq k\leq n$). Let $c\in [n]\backslash \textnormal{dom }\alpha$.
 If $c< \min A_1$ and $p<k$, then define $\beta$ and $\gamma$, respectively as:
 \begin{equation}\label{n1}
        \left(\begin{array}{ccccc}
            c & A_1 & A_2 & \cdots & A_p \\
            1  &  2 &  3 &  \cdots &  p+1
          \end{array} \right) \textrm{ and } \left(\begin{array}{cccccccc}
            2  &  3   & \cdots &  p+1 & p+2  \\
            k & k-1 & \cdots & k-p+1 & k-p
          \end{array} \right);\end{equation}
 if $c< \min A_1$ and $p=k$, then $k\leq n-2$, and so $\beta$ and $\gamma$ can be defined respectively as:
 \begin{equation}\label{n2}
        \left(\begin{array}{ccccc}
            c & A_1 & A_2 & \cdots & A_p \\
            2  &  3 &  4 &  \cdots &  p+2
          \end{array} \right)
\textrm{ and }\left(\begin{array}{cccccccc}
            1  &  3 & 4 & \cdots &  p+2   \\
            k+1 & k &  k-1 &\cdots & k-p+1
          \end{array} \right);\end{equation}
if $\max A_i<c< \min A_{i+1}$ ($i\in\{1,2,\ldots,p-1\}$) and $p<k$, then define $\beta$ and $\gamma$, respectively as:
 \begin{eqnarray}\label{n3}
        \left(\begin{array}{ccccccc}
            A_1  & \cdots & A_i& c & A_{i+1} & \cdots & A_p \\
            1  & \cdots &  i & i+1 &  i+2  & \cdots &  p+1
          \end{array} \right)
\textrm{ and }   \nonumber \\ \left(\begin{array}{ccccccc}
            1   & \cdots &  i &  i+2  & \cdots &  p+1 & p+2  \\
            k  & \cdots & k-i+1& k-i & \cdots & k-p+1 & k-p
          \end{array} \right);\end{eqnarray}
if $\max A_i<c< \min A_{i+1}$ ($i\in\{1,2,\ldots,p-1\}$) and $p=k$, then define $\beta$ and $\gamma$, respectively as:
 \begin{eqnarray}\label{n4}
        \left(\begin{array}{ccccccc}
            A_1  & \cdots & A_i& c & A_{i+1} & \cdots & A_p \\
            2  & \cdots &  i+1 & i+2 &  i+3  & \cdots &  p+2
          \end{array} \right)
\textrm{ and }  \nonumber\\ \left(\begin{array}{ccccccc}
            1  &  2  &  \cdots &  i+1 &  i+3  & \cdots & p+2  \\
            k+1 & k & \cdots & k-i+1& k-i & \cdots & k-p+1
          \end{array} \right);\end{eqnarray}
   if $\min A_i<c< \max A_{i}$ ($i\in\{1,2,\ldots,p\}$) and $p<k$, then define $\beta$ and $\gamma$, respectively as:
 \begin{eqnarray}\label{n31}
        \left(\begin{array}{ccccccc}
            A_1  & \cdots & \{\min A_i,\ldots,c-1\}& \{c+1,\ldots,\max A_i\} & A_{i+1} & \cdots & A_p \\
            1  & \cdots &  i & i+1 & i+2  & \cdots &  p+1
          \end{array} \right)
\textrm{ and }  \nonumber \\ \left(\begin{array}{ccccccc}
            1   & \cdots &  \{i,i+1\} & i+2 & \cdots &  p+1 & p+2  \\
            k  & \cdots & k-i+1 & k-i & \cdots & k-p+1 & k-p
          \end{array} \right);\end{eqnarray}
   if $\min A_i<c< \max A_{i}$ ($i\in\{1,2,\ldots,p\}$) and $p=k$, then define $\beta$ and $\gamma$, respectively as:
 \begin{eqnarray}\label{m41}
        \left(\begin{array}{ccccccc}
            A_1  & \cdots & \{\min A_i,\ldots,c-1\}& \{c+1,\ldots,\max A_i\} & A_{i+1} & \cdots & A_p \\
            2  & \cdots &  i+1 & i+2 & i+3 & \cdots &  p+2
          \end{array} \right)
\textrm{ and }  \nonumber \\ \left(\begin{array}{cccccccc}
            1  & 2  & \cdots &  \{i+1,i+2\} & i+3 & \cdots &  p+1 &p+2 \\
            k+1 & k & \cdots & k-i+1 & k-i & \cdots & k-p+2 & k-p+1
          \end{array} \right);\end{eqnarray}

 if $\max A_p<c$ and $p<k$, then we define $\beta$ and $\gamma$, respectively as:
 \begin{eqnarray}\label{n5}
        \left(\begin{array}{ccccc}
         A_1 & A_2 & \cdots & A_p & c \\
         1  &  2  &  \cdots & p&  p+1
          \end{array} \right) \textrm{ and } \left(\begin{array}{cccccccc}
            2  &  3   & \cdots &  p & p+2  \\
            k & k-1 & \cdots & k-p+1 & k-p
          \end{array} \right)\end{eqnarray}
 and if $\max A_p <c$ and $p=k$, then define $\beta$ and $\gamma$, respectively as:
 \begin{equation}\label{n6}
        \left(\begin{array}{ccccc}
         A_1 & A_2 & \cdots & A_p & c\\
          2  &  3 &  \cdots &  p+1& p+2
          \end{array} \right)
\textrm{ and }\left(\begin{array}{cccccccc}
            1  &  2  & 3 & \cdots &  p+1   \\
            k+1 & k & k-1 & \cdots & k-p+1
          \end{array} \right).\end{equation}
Then, it is easy to see that $\beta,\gamma\in W_{p+1}$, and in all the cases one can see that $\alpha=\beta\gamma$.

{\bf Case II:} If $\textnormal{im }\alpha$ is not convex. Then $\alpha$ is of the form
 $  \left(\begin{array}{cccc}
         A_1 & A_2 & \cdots & A_p  \\
          b_p  & b_{p-1} &  \cdots &  b_1
          \end{array} \right)$ where $b_1<b_2<\ldots < b_p$ and $b_i - b_{i+1}\geq 2$ for some $i\in\{1,2,\ldots,p-1\}$. By contractive property of $\alpha$ we must have $\min A_{i+1}-\max A_i\geq 2$ for such $i$. Now, define
$$\beta=\left(\begin{array}{cccccccc}
            A_1 & A_2 & \cdots & A_i& \max A_i +1 & A_{i+1} & \cdots & A_p \\
            b_p & b_{p-1} & \cdots & b_{p-i+1}& b_{p-i+1}-1 & b_{p-i} & \cdots & b_1
 \end{array} \right).$$ Observe that since $|\textnormal{im }\alpha|\leq n-2$, then $|\textnormal{im }\beta|\leq n-1$.  Let $c\in [n]\backslash \textnormal{im }\beta$ and let $B=\textnormal{im }\beta \cup \{c\}$. Define $\gamma$ to be the partial identity on $B$. Then one can easily verify that $\beta$ and $\gamma$ are in $W_{p+1}$ and $\alpha=\beta\gamma$, as required.

\end{proof}
The following corollary is immediate:
\begin{corollary}\label{cjp} For $n\geq 3$, $\langle W_{n-1}\rangle=\mathcal{ORCP}_{n,n-1}$.
\end{corollary}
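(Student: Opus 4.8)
The plan is to mirror exactly the argument used for Corollary~\ref{cjr}, now feeding in Lemma~\ref{jp} in place of Lemma~\ref{jr}. The statement splits into the two inclusions $\langle W_{n-1}\rangle\subseteq\mathcal{ORCP}_{n,n-1}$ and $\mathcal{ORCP}_{n,n-1}\subseteq\langle W_{n-1}\rangle$.

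First I would dispose of $\langle W_{n-1}\rangle\subseteq\mathcal{ORCP}_{n,n-1}$. For this it suffices to observe that $\mathcal{ORCP}_{n,n-1}$ is a subsemigroup (indeed an ideal) of $\mathcal{ORCP}_n$: if $\alpha,\beta\in\mathcal{ORCP}_n$ with $|\textnormal{im }\alpha|,|\textnormal{im }\beta|\leq n-1$, then $\textnormal{im }(\alpha\beta)\subseteq\textnormal{im }\beta$, so $|\textnormal{im }(\alpha\beta)|\leq n-1$ and hence $\alpha\beta\in\mathcal{ORCP}_{n,n-1}$. Since $W_{n-1}\subseteq\mathcal{ORCP}_{n,n-1}$ by definition, the subsemigroup generated by $W_{n-1}$ lies inside $\mathcal{ORCP}_{n,n-1}$.

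For the reverse inclusion, note that $\mathcal{ORCP}_{n,n-1}=\bigcup_{p=1}^{n-1}W_p$, so it is enough to show $W_p\subseteq\langle W_{n-1}\rangle$ for each $1\leq p\leq n-1$. I would argue by descending induction on $n-1-p$: the base case $p=n-1$ is immediate, and for $p\leq n-2$ Lemma~\ref{jp} gives $W_p\subseteq\langle W_{p+1}\rangle$, which by the inductive hypothesis (and the fact that $\langle W_{n-1}\rangle$ is a subsemigroup) is contained in $\langle W_{n-1}\rangle$. Equivalently, one simply chains the inclusions $W_p\subseteq\langle W_{p+1}\rangle\subseteq\langle W_{p+2}\rangle\subseteq\cdots\subseteq\langle W_{n-1}\rangle$ by repeated application of Lemma~\ref{jp}. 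Combining the two inclusions yields the claimed equality; the hypothesis $n\geq 3$ is needed only so that $n-2\geq 1$ and Lemma~\ref{jp} has a nonempty range of applicability (and so that $W_{n-1}$ is a proper part of the semigroup).

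Since every step is a formal consequence of Lemma~\ref{jp} together with the elementary closure observation above, I do not anticipate any genuine obstacle here; the only point requiring a little care is phrasing the induction (or the chain of inclusions) cleanly, exactly as in the proof of Corollary~\ref{cjr}.
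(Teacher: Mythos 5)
Your proposal is correct and follows essentially the same route as the paper: the paper's own proof simply cites Lemma~\ref{jp} together with the containment $\langle W_{n-1}\rangle\leq\mathcal{ORCP}_{n,n-1}$, which is exactly the two-inclusion argument (chained applications of Lemma~\ref{jp} for one direction, closure of the ideal for the other) that you spell out. Your version is just a more explicit write-up of the same idea.
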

\begin{proof}
  The proof follows from Lemma~\ref{jp} and the fact that $\langle W_{n-1} \rangle \leq \mathcal{ORCP}_{n-1}$.
\end{proof}

Recall the definitions of  $\alpha_{s,t}$ and $\pi_{\{i,i+1\},k}$ (for some $i,k,s,t$ in $\{1,2,\ldots n\}$) after Corollary~\ref{cjr}, let $\alpha^*_{s,t}$ and $\pi^*_{\{i,i+1\},k}$ denote the order-reversing counterparts of $\alpha_{s,t}$ and $\pi_{\{i,i+1\},k}$, respectively (which exist if $t\in \{1,n\}$ $(1\leq s\leq n)$, or $t=s-n+1$ for some $s\in \{2,3,\ldots, n-1$). For the purpose of illustration, consider the transformation $\alpha_{2,1}=  \left(\begin{array}{ccccc}
         1 & 3 & 4 &\cdots & n \\
          2 & 3 & 4 & \cdots & n
          \end{array} \right)$. Then, $\alpha^*_{2,1}=  \left(\begin{array}{ccccc}
         1 & 3 & 4 &\cdots & n \\
          n  & n-1 & n-2 & \cdots & 2
          \end{array} \right)$.

We now have elements of $W_{n-1}$ displayed in the table below, where $1\leq i\leq n-1$ and $2\leq j\leq n-1$.
\begin{equation}
\begin{tabular}{|c|c|c|c|c|}
\hline
  {\bf Kernel Partition / Image sets} & $[n]\backslash \{1\}$  & $[n]\backslash \{j\}$ & $[n]\backslash \{n-j+1\}$ & $[n]\backslash \{n\}$ \\
  \hline
  \multirow{2}{*}{$P_{[n]\backslash \{1\}}$} & $\alpha_{1,1}$ &&& $\alpha_{1,n}$    \\
                                     & $\alpha_{1,1}^*$ &&& $\alpha_{1,n}^*$   \\  \hline
  \multirow{2}{*}{$P_{[n]\backslash \{j\}}$} & $\alpha_{j,1}$ &$\alpha_{j,j}$&& $\alpha_{j,n}$    \\
                            & $\alpha_{j,1}^*$ &&$\alpha_{j,n-j+1}^*$& $\alpha_{j,n}^*$      \\\hline
\multirow{2}{*}{$P_{[n]\backslash \{n\}}$}  & $\alpha_{n,1}$ &&& $\alpha_{n,n}$ \\
                                 & $\alpha_{n,1}^*$ &&& $\alpha_{n,n}^*$   \\ \hline
  \multirow{2}{*}{$P_{\{i,i+1\}}$} & $\pi_{(i,i+1),1}$ &&& $\pi_{(i,i+1),n}$   \\
             & $\pi_{(i,i+1),1}^*$  &&& $\pi_{(i,i+1),n}^*$   \\
  \hline
\end{tabular}\tag{2}\label{tab2}
\end{equation}

 Notice that by Theorem~\ref{zbt} each row in Table~\eqref{tab2} represents a unique $\mathcal{R}^*-$class in $W_{n-1}$, while columns represents $\mathcal{L}^*-$classes. Therefore, the number of $\mathcal{R}^*-$classes in $W_{n-1}$ is $2n-1$. The following lemmas follows immediately.

 \begin{lemma}\label{rw} For $n\geq 3$, \textnormal{rank(}$\langle W_{n-1}\rangle)\geq 2n-1$.
 \end{lemma}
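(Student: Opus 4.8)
The plan is to mirror the argument used for Lemma~\ref{lr} in Section~3, now working inside $\langle W_{n-1}\rangle$ instead of $\langle K_{n-1}\rangle$. First I would invoke Corollary~\ref{cjp}, which identifies $\langle W_{n-1}\rangle$ with the ideal $\mathcal{ORCP}_{n,n-1}$, so that it suffices to bound the rank of $\langle W_{n-1}\rangle$ from below. Next I would recall the standard fact that in any semigroup, if $A$ is a generating set of $S=\langle W_{n-1}\rangle$, then every $\mathcal{R}^*$-class of $S$ that is \emph{not} reachable as a proper product must contain a member of $A$; more concretely, since $W_{n-1}$ sits at the top of the ideal (it is the set of elements of maximal height $n-1$ in $\mathcal{ORCP}_{n,n-1}$), no element of $W_{n-1}$ can be written as a product of two elements of $\mathcal{ORCP}_{n,n-1}$ unless at least one factor already has height $n-1$; hence every generating set must meet every $\mathcal{R}^*$-class of $W_{n-1}$.

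The key computational step is then to count the $\mathcal{R}^*$-classes of $W_{n-1}$. By Theorem~\ref{zbt}(ii), two elements of $W_{n-1}$ are $\mathcal{R}^*$-related precisely when they share the same kernel. I would read off from Table~\eqref{tab2} that the distinct kernel partitions occurring among elements of $W_{n-1}$ are exactly $P_{[n]\backslash\{s\}}$ for $1\le s\le n$ (the $n$ one-to-one types, i.e.\ those in $[n-1,n-1]$) together with $P_{\{i,i+1\}}$ for $1\le i\le n-1$ (the $n-1$ full-map types in $[n,n-1]$), giving $n+(n-1)=2n-1$ distinct kernels, hence $2n-1$ distinct $\mathcal{R}^*$-classes. (The order-preserving and order-reversing elements with the same kernel, e.g.\ $\alpha_{1,1}$ and $\alpha_{1,1}^*$, lie in the \emph{same} row of the table and so are $\mathcal{R}^*$-related, which is why the count is still $2n-1$ and not larger.) Combining this with the observation that a generating set must contain at least one representative of each of these $2n-1$ classes yields $\mathrm{rank}(\langle W_{n-1}\rangle)\ge 2n-1$.

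The main obstacle, such as it is, is the justification that no element of $W_{n-1}$ arises as a product of strictly-lower-height elements, so that the $\mathcal{R}^*$-class argument is legitimate: this rests on the elementary fact that for any $\alpha,\beta$ in a transformation-type semigroup, $|\mathrm{im}(\alpha\beta)|\le\min\{|\mathrm{im}\,\alpha|,|\mathrm{im}\,\beta|\}$, so a product landing in $W_{n-1}$ forces both factors to have height at least $n-1$, i.e.\ to lie in $W_{n-1}$; this is the same principle already used implicitly in Lemma~\ref{lr}. Everything else is bookkeeping with Table~\eqref{tab2}. I would therefore write the proof in one short paragraph: apply Corollary~\ref{cjp}, note that any generating set of $\langle W_{n-1}\rangle$ must contain a representative from each $\mathcal{R}^*$-class of $W_{n-1}$, and conclude from the $2n-1$ rows of Table~\eqref{tab2} that $\mathrm{rank}(\langle W_{n-1}\rangle)\ge 2n-1$, exactly paralleling the proof of Lemma~\ref{lr}.
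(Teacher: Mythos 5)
Your proposal is correct and follows essentially the same route as the paper, which simply notes that the proof is analogous to that of Lemma~\ref{lr}: any generating set must meet each of the $2n-1$ $\mathcal{R}^*$-classes of $W_{n-1}$ (the $n$ kernels $P_{[n]\backslash\{s\}}$ plus the $n-1$ kernels $P_{\{i,i+1\}}$, with order-preserving and order-reversing elements of equal kernel falling into the same class). Your added justification that a product landing in $W_{n-1}$ forces its factors into $W_{n-1}$ is a detail the paper leaves implicit, but it is the same argument.
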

\begin{proof}
 The proof is similar to the proof of Corollary~\ref{lr}. \end{proof}

 \begin{lemma}\label{gs2} For $n\geq 3$, $K_{n-1}\cup\{\alpha_{n,n}^*, \alpha_{1,1}^*, \alpha_{j,n-j+1}^*: \, 2\leq j\leq n-1\}$ generates $W_{n-1}$.
 \end{lemma}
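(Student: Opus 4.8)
Write $U=K_{n-1}\cup\{\alpha_{n,n}^*,\alpha_{1,1}^*,\alpha_{j,n-j+1}^*:2\le j\le n-1\}$ for the proposed set. Since $W_{n-1}=K_{n-1}\cup K_{n-1}^*$ and $K_{n-1}\subseteq U$, it suffices to show every element of $K_{n-1}^*$ lies in $\langle U\rangle$. By the classification underlying Table~\eqref{tab2}, an element of $K_{n-1}^*$ is either of the form $\alpha_{s,t}^*$, in which case its kernel partition is $P_{[n]\backslash\{s\}}$ for some $1\le s\le n$, or of the form $\pi_{(i,i+1),k}^*$ with $1\le i\le n-1$ and $k\in\{1,n\}$, in which case its kernel partition is $P_{\{i,i+1\}}$. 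The plan is to treat these two families separately, pivoting in each case on the order-reversing generators already in $U$.

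The key preliminary remark is that each of the three ``pivot'' generators in $U$ is an order-reversing isometry from its domain onto its image: $\alpha_{1,1}^*$ and $\alpha_{n,n}^*$ are the reversals of the convex chains $\{2,\dots,n\}$ and $\{1,\dots,n-1\}$, while $\alpha_{j,n-j+1}^*$ is the restriction to $[n]\backslash\{j\}$ of the reversal $x\mapsto n+1-x$ of $[n]$, and all of these are distance preserving. Hence, if $\theta$ is such a pivot and $\alpha\in K_{n-1}^*$ has the same domain as $\theta$, the partial map $\sigma$ on $\textnormal{im }\theta$ determined by $(x\theta)\sigma=x\alpha$ is well defined ($\theta$ being injective), order-preserving (it is ``$\theta^{-1}$ followed by $\alpha$'', a composite of two order-reversing maps), a contraction (because $|(x\theta)\sigma-(y\theta)\sigma|=|x\alpha-y\alpha|\le|x-y|=|x\theta-y\theta|$ by the isometry property of $\theta$), and has $|\textnormal{im }\sigma|=|\textnormal{im }\alpha|=n-1$; thus $\sigma\in K_{n-1}\subseteq U$ and $\theta\sigma=\alpha$. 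Applying this to $\alpha\in K_{n-1}^*$ with kernel partition $P_{[n]\backslash\{s\}}$ and the unique pivot $\theta\in U$ sharing that kernel partition ($\theta=\alpha_{1,1}^*$ if $s=1$, $\theta=\alpha_{n,n}^*$ if $s=n$, $\theta=\alpha_{j,n-j+1}^*$ if $s=j\in\{2,\dots,n-1\}$) immediately gives $\alpha=\theta\sigma\in\langle U\rangle$; in particular this covers $\alpha_{1,n}^*$, $\alpha_{n,1}^*$ and $\alpha_{j,1}^*,\alpha_{j,n}^*$ for $2\le j\le n-1$.

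For the remaining family, $\alpha=\pi_{(i,i+1),k}^*$ with $k\in\{1,n\}$, I would verify directly the factorization $\alpha=\pi_{(i,i+1),k}\cdot\alpha_{k,k}^*$, where $\pi_{(i,i+1),k}\in K_{n-1}$ is the order-preserving map with kernel partition $P_{\{i,i+1\}}$ and image $[n]\backslash\{k\}$, and $\alpha_{k,k}^*\in\{\alpha_{1,1}^*,\alpha_{n,n}^*\}\subseteq U$ is the reversal of the convex chain $[n]\backslash\{k\}$. Since both sides are order-reversing and have the same kernel partition and the same image, they coincide; the underlying computation is of exactly the routine type carried out for the order-preserving generators in the proof of Proposition~\ref{gs1} (and can equally be read as the ``$\sigma\cdot\theta$'' mirror image of the preceding argument). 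Hence $\alpha\in\langle U\rangle$.

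Combining the two cases yields $K_{n-1}^*\subseteq\langle U\rangle$, so $\langle U\rangle\supseteq K_{n-1}\cup K_{n-1}^*=W_{n-1}$, while the reverse inclusion is clear; therefore $\langle U\rangle=W_{n-1}$. The only step that is more than bookkeeping is the verification that the auxiliary factor $\sigma$ in the first case is a contraction: this is precisely why the pivots $\alpha_{1,1}^*,\alpha_{n,n}^*,\alpha_{j,n-j+1}^*$ must be chosen to be isometries and not merely order-reversing contractions. Once that is granted, everything else reduces to checking compositions of the same kind already done in Section~3.
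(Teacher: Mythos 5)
Your proof is correct and follows essentially the same route as the paper: each order-reversing element of $W_{n-1}$ is exhibited as the product of an order-preserving element of $K_{n-1}$ with one of the order-reversing generators. The paper simply writes out the six explicit products $\alpha_{1,n}\cdot\alpha_{n,n}^*=\alpha_{1,n}^*$, $\alpha_{j,n}\cdot\alpha_{n,n}^*=\alpha_{j,n}^*$, $\alpha_{n,1}\cdot\alpha_{1,1}^*=\alpha_{n,1}^*$, $\alpha_{j,1}\cdot\alpha_{1,1}^*=\alpha_{j,1}^*$, $\pi_{\{i,i+1\},n}\cdot\alpha_{n,n}^*=\pi_{\{i,i+1\},n}^*$ and $\pi_{\{i,i+1\},1}\cdot\alpha_{1,1}^*=\pi_{\{i,i+1\},1}^*$ (pivot always on the right, using only the two corner pivots), whereas you place the pivot on the left for the $\alpha_{s,t}^*$ family and justify the existence of the order-preserving cofactor abstractly via the isometry property of the pivots --- an immaterial, if somewhat slicker, variation.
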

\begin{proof}
Using routine multiplication we see that
  \begin{eqnarray*}
        \alpha_{1,n}\cdot \alpha_{n,n}^* &=& \alpha_{1,n}^*, \\
         \alpha_{j,n}\cdot \alpha_{n,n}^* &=& \alpha_{j,n}^*, \\
        \alpha_{n,1}\cdot \alpha_{1,1}^* &=& \alpha_{n,1}^*, \\
         \alpha_{j,1}\cdot \alpha_{1,1}^* &=& \alpha_{j,1}^*, \\                                                                   \pi_{\{i,i+1\},n}\cdot \alpha_{n,n}^* &=& \pi_{\{i,i+1\},n}^* \textrm{ and}\\
         \pi_{\{i,i+1\},1}\cdot \alpha_{1,1}^* &=& \pi_{\{i,i+1\},1}^*.
                                                                 \end{eqnarray*}
  Therefore, $W_{n-1}\subseteq \langle K_{n-1}\cup\{\alpha_{n,n}^*, \alpha_{1,1}^*, \alpha_{j,n-j+1}^*\} \rangle$. Hence, $K_{n-1}\cup\{\alpha_{n,n}^*, \alpha_{1,1}^*, \alpha_{j,n-j+1}^*\} $  generates $W_{n-1}$.
\end{proof}

The next proposition gives us a minimal generating set of $W_{n-1}$.

\begin{proposition}\label{mgs} Let $ M=\{\alpha_{1,n}, \pi_{\{i,i+1\},n},\alpha_{j,n-j+1}^*: 1\leq j\leq n,\, 1\leq i\leq n-1 \}$. Then $M$ is a minimal generating set of $W_{n-1}$.
\end{proposition}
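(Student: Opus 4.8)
The plan is to establish the two halves of the assertion in turn: that $\langle M\rangle=\langle W_{n-1}\rangle$, and that no generating set of $\langle W_{n-1}\rangle$ has fewer than $|M|=2n$ elements.

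For the first half I would lean entirely on the results already in hand. By Lemma~\ref{gs2}, $W_{n-1}$ is generated by $K_{n-1}$ together with $\alpha_{n,n}^{*}$, $\alpha_{1,1}^{*}$ and the maps $\alpha_{j,n-j+1}^{*}$ $(2\le j\le n-1)$; by Proposition~\ref{gs1}, $K_{n-1}$ is in turn generated by $\alpha_{n,1}$, $\alpha_{1,n}$, the $\alpha_{j,j}$ $(2\le j\le n-1)$ and the $\pi_{\{i,i+1\},n}$. Since $\alpha_{1,n}$, all $\pi_{\{i,i+1\},n}$ and all $\alpha_{j,n-j+1}^{*}$ $(1\le j\le n)$ — in particular $\alpha_{1,n}^{*}$ and $\alpha_{n,1}^{*}$ — already lie in $M$, it remains only to write $\alpha_{n,1}$, the $\alpha_{j,j}$, and $\alpha_{n,n}^{*}$, $\alpha_{1,1}^{*}$ as products of members of $M$. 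These are short computations of exactly the kind displayed before Proposition~\ref{gs1} and Lemma~\ref{gs2}; I would check, for instance, $\alpha_{n,n}^{*}=\alpha_{n,1}^{*}\alpha_{1,n}$, $\alpha_{1,1}^{*}=\alpha_{1,n}\alpha_{n,1}^{*}$, $\alpha_{j,j}=\alpha_{j,n-j+1}^{*}\alpha_{n-j+1,j}^{*}$ (noting $\alpha_{n-j+1,j}^{*}=\alpha_{n-j+1,\,n-(n-j+1)+1}^{*}\in M$), and $\alpha_{n,1}=\alpha_{n,1}^{*}\alpha_{1,1}^{*}$. Once this is done, Proposition~\ref{gs1} gives $K_{n-1}\subseteq\langle M\rangle$, then Lemma~\ref{gs2} gives $W_{n-1}\subseteq\langle M\rangle$, and since $M\subseteq W_{n-1}$ we conclude $\langle M\rangle=\langle W_{n-1}\rangle$.

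For the second half, the starting point is Lemma~\ref{rw}: any generating set of $\langle W_{n-1}\rangle$ must meet each of the $2n-1$ $\mathcal{R}^{*}$-classes of $W_{n-1}$ (a height-$(n-1)$ product $g_{1}\cdots g_{k}$ has $\ker(g_{1}\cdots g_{k})=\ker g_{1}$, so $g_{1}$ lies in the $\mathcal{R}^{*}$-class of the product), and hence has size at least $2n-1$. Since $|M|=1+(n-1)+n=2n$, the remaining task — and the crux of the whole proposition — is to gain the extra unit, i.e. to show $\textnormal{rank}(\langle W_{n-1}\rangle)\ge 2n$. The plan is to assume a generating set $A$ meets every $\mathcal{R}^{*}$-class in exactly one element and to reach a contradiction by a descent argument on lengths of factorisations: take a suitable order-reversing element of $W_{n-1}$ of convex image (e.g. one in the class $P_{[n]\backslash\{1\}}$), write it as $g_{1}\cdots g_{k}$, observe that the leading factor must lie in its own $\mathcal{R}^{*}$-class, and push the factorisation forward; the contraction condition severely restricts the admissible cofactors at each step, and since the full reversal $\rho\notin\mathcal{ORCP}_{n,n-1}$ the chain of forced cofactors cannot be closed without a second generator in one of the classes $P_{[n]\backslash\{1\}},\,P_{\{1,2\}},\,P_{[n]\backslash\{n\}},\,P_{\{n-1,n\}}$, contradicting $|A\cap R_{\kappa}|=1$. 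This gives $|A|\ge 2n$, and combined with the first half shows $M$ is of minimum size, hence minimal. The hard part will be precisely this last step — making rigorous which factorisations the contraction constraint blocks and converting that into the single extra generator over the $\mathcal{R}^{*}$-class count — whereas everything else reduces to the already-established Lemmas~\ref{gs2} and~\ref{rw} and Proposition~\ref{gs1}.
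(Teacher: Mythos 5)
The generating half of your plan is sound and is essentially the paper's own route: reduce to Lemma~\ref{gs2} and Proposition~\ref{gs1} and supply a handful of explicit products. Your identities $\alpha_{n,n}^{*}=\alpha_{n,1}^{*}\alpha_{1,n}$, $\alpha_{1,1}^{*}=\alpha_{1,n}\alpha_{n,1}^{*}$, $\alpha_{j,j}=\alpha_{j,n-j+1}^{*}\alpha_{n-j+1,j}^{*}$ and $\alpha_{n,1}=\alpha_{n,1}^{*}\alpha_{1,1}^{*}$ all check out, and they do give $\langle M\rangle=\langle W_{n-1}\rangle$.

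The minimality half, however, is aimed at a false target. You correctly compute that, as printed, $|M|=1+(n-1)+n=2n$, and you therefore propose to prove $\textnormal{rank}(\langle W_{n-1}\rangle)\ge 2n$. That inequality is false: the rank is $2n-1$ (Corollary~\ref{rwn}), and in fact $M$ itself is redundant, because $\alpha_{1,n}^{*}=\alpha_{1,n}\,\alpha_{n,1}^{*}\,\alpha_{1,n}$ (on $\{2,\ldots,n\}$ this is $i\mapsto i-1\mapsto n+2-i\mapsto n+1-i$), so $M\setminus\{\alpha_{1,n}^{*}\}$ is a generating set with $2n-1$ elements, exactly one in each $\mathcal{R}^{*}$-class. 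Consequently the descent argument you sketch --- that a set meeting each $\mathcal{R}^{*}$-class exactly once cannot close up, forcing a second generator in one of the classes --- cannot be made rigorous; for $n=3$ one can check by hand that the five maps $\alpha_{1,3},\ \pi_{(1,2),3},\ \pi_{(2,3),3},\ \alpha_{2,2}^{*},\ \alpha_{3,1}^{*}$ generate all $22$ elements of $W_{2}$. There is no ``extra unit'' to gain.

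What has actually gone wrong is a typo in the statement: the range should be $2\le j\le n$, i.e.\ $\alpha_{1,n}^{*}$ should be absent (while $\alpha_{n,1}^{*}$, the $j=n$ term, is retained). This is the set the paper's proof really works with: its redundancy computation starts from $G\cup Z\cup\{\alpha_{n,1}^{*}\}$ and discards $\alpha_{n,n}^{*}$, $\alpha_{1,1}^{*}$, $\alpha_{n,1}$ and the $\alpha_{j,j}$, leaving $\{\alpha_{1,n},\pi_{(i,i+1),n}\}\cup\{\alpha_{j,n-j+1}^{*}:2\le j\le n\}$, and it explicitly asserts $|M|=2n-1$. With that reading the minimality is immediate from Lemma~\ref{rw} (lower bound $2n-1$ from the $\mathcal{R}^{*}$-class count, matched by $|M|=2n-1$), and the only extra work in the generation half is to recover $\alpha_{1,1}^{*}$ and $\alpha_{1,n}^{*}$ from the remaining generators, which the identities above already do. So the step you identify as the crux of the proposition is not merely hard --- it is unprovable --- and the correct move is to repair the index range rather than to strengthen the lower bound.
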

\begin{proof}
  By Lemma~\ref{rw}, if $B$ is any generating set of $W_{n-1}$, then $B$ must have at least $2n-1$ elements. Furthermore, notice that $|M|=2n-1$. Thus, to show $M$ is a minimal generating set of $W_{n-1}$ we only need to show that $M$ generates $W_{n-1}$. Let $ Z=\{\alpha_{n,n}^*, \alpha_{1,1}^*, \alpha_{j,n-j+1}^*: \, 2\leq j\leq n-1\}$. Then, by Lemma~\ref{gs2}, $K_{n-1}\cup\ Z $ generates $W_{n-1}$. Now, recall that by Lemma~\ref{gs1}, the set $G=\{\alpha_{n,1},\alpha_{1,n},\alpha_{j,j},\pi_{\{i,i+1\},n}: 2\leq j\leq n-1,\, 1\leq i\leq n-1 \}$ generates $K_{n-1}$. Therefore, $G \cup Z=\{\alpha_{n,1},\alpha_{1,n},\alpha_{j,j},\pi_{\{i,i+1\},n},\alpha_{n,n}^*, \alpha_{1,1}^*, \alpha_{j,n-j+1}^*: 2\leq j\leq n-1,\, 1\leq i\leq n-1 \}$ generates $W_{n-1}$.

  However, not all elements in $G \cup Z$ are needed in order to generate $W_{n-1}$, for, if $ \alpha_{n,1}$ is replaced with $ \alpha_{n,1}^*$, then we observe that
   \begin{eqnarray*}
        \alpha_{n,1}^*\cdot \alpha_{1,n} &=& \alpha_{n,n}^*, \\
         \alpha_{1,n}\cdot \alpha_{n,1}^* &=& \alpha_{1,1}^*, \\
         \alpha_{j,n-j+1}^*\cdot\alpha_{n-j+1,j}^* & = & \alpha_{j,j} \textnormal{ and}\\
         \alpha_{n,1}^*\cdot\alpha_{1,n}\cdot\alpha_{n,1}^* &= & \alpha_{n,1}.
                                                                 \end{eqnarray*}
  Meaning that $\alpha_{n,n}^*, \alpha_{1,1}^*, \alpha_{n,1}$  and $\alpha_{j,j}\, (2\leq j\leq n-1)$ are redundant elements.

  Hence, $\{\alpha_{1,n}, \pi_{\{i,i+1\},n},\alpha_{j,n-j+1}^*: 1\leq j\leq n,\, 1\leq i\leq n-1 \}$ generates $W_{n-1}$ as required.
 \end{proof}

\begin{corollary}\label{rwn} For $n\geq 3$, \textnormal{rank}$\langle W_{n-1}\rangle= 2n-1$.
\end{corollary}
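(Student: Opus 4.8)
The plan is to combine the two cardinality facts that have already been established. By Lemma~\ref{rw}, any generating set of $\langle W_{n-1}\rangle$ has at least $2n-1$ elements, so $\textnormal{rank}\langle W_{n-1}\rangle \geq 2n-1$. For the reverse inequality, Proposition~\ref{mgs} exhibits the explicit set $M=\{\alpha_{1,n}, \pi_{\{i,i+1\},n},\alpha_{j,n-j+1}^*: 1\leq j\leq n,\, 1\leq i\leq n-1 \}$, which generates $W_{n-1}$ and has exactly $2n-1$ elements; hence $\textnormal{rank}\langle W_{n-1}\rangle \leq |M| = 2n-1$. Putting the two bounds together yields $\textnormal{rank}\langle W_{n-1}\rangle = 2n-1$.

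Concretely, I would write: ``Since $M$ generates $\langle W_{n-1}\rangle$ by Proposition~\ref{mgs} and $|M| = 2n-1$, we have $\textnormal{rank}\langle W_{n-1}\rangle \leq 2n-1$. On the other hand, by Lemma~\ref{rw}, $\textnormal{rank}\langle W_{n-1}\rangle \geq 2n-1$. Therefore $\textnormal{rank}\langle W_{n-1}\rangle = 2n-1$, as required.'' This is essentially a one-line corollary, mirroring the structure of Corollary~\ref{tr} in the previous section, where $\textnormal{rank}(\mathcal{OCP}_{n,n-1})=2n-1$ was deduced analogously from Proposition~\ref{gs1} and Lemma~\ref{lr}.

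There is no real obstacle here: all the substantive work — establishing the lower bound via counting $\mathcal{R}^*$-classes and establishing the upper bound by verifying that $M$ generates $W_{n-1}$ through the explicit multiplications in the proofs of Lemmas~\ref{gs2} and Proposition~\ref{mgs} — has already been carried out. The only point to be slightly careful about is making sure the statement refers to $\langle W_{n-1}\rangle$ (the subsemigroup generated by $W_{n-1}$, equivalently the ideal $\mathcal{ORCP}_{n,n-1}$ by Corollary~\ref{cjp}) rather than to $W_{n-1}$ itself, which is merely a $\mathcal{D}^*$-class and not a subsemigroup. I expect this corollary will later feed into the computation of $\textnormal{rank}(\mathcal{ORCP}_n)$ itself, by the same device used in Theorem~\ref{cr}: the only element of $\mathcal{ORCP}_n$ of height $n$ is the identity $1_{[n]}$, so $\mathcal{ORCP}_n = \mathcal{ORCP}_{n,n-1}\cup\{1_{[n]}\}$ with $\mathcal{ORCP}_{n,n-1}$ an ideal, giving $\textnormal{rank}(\mathcal{ORCP}_n) = 2n$.
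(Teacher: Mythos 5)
Your proposal is correct and matches the paper's argument: the paper's proof of this corollary is literally ``Follows from Proposition~\ref{mgs},'' which already contains the upper bound via the explicit set $M$ of size $2n-1$ and invokes Lemma~\ref{rw} for the lower bound, exactly as you do. Your additional remarks (distinguishing $\langle W_{n-1}\rangle$ from $W_{n-1}$ and anticipating the final rank computation) are accurate but not needed for the corollary itself.
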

\begin{proof}
  Follows from Proposition~\ref{mgs}.
\end{proof}
We now have the rank of $\mathcal{OCP}_{n,n-1}$ in the following proposition.

\begin{proposition}\label{rw2} \textnormal{Rank(}$\mathcal{OCP}_{n,n-1}) =2n-1$ for all $n\geq 3$.
\end{proposition}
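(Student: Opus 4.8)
The plan is to recognise that the object named here, $\mathcal{OCP}_{n,n-1}$, is precisely the ideal analysed in Section~3, so its rank can be determined by matching the lower and upper bounds already established there. The first step is to invoke Corollary~\ref{cjr}, which gives $\mathcal{OCP}_{n,n-1}=\langle K_{n-1}\rangle$; thus computing $\textnormal{rank}(\mathcal{OCP}_{n,n-1})$ reduces to computing $\textnormal{rank}(\langle K_{n-1}\rangle)$, and it remains only to trap this quantity at $2n-1$ from both sides.

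For the lower bound I would cite Lemma~\ref{lr} directly, which yields $\textnormal{rank}(\mathcal{OCP}_{n,n-1})\geq 2n-1$. The mechanism, recalled from the surrounding discussion, is that $K_{n-1}$ is the top $\mathcal{D}^*$-class of the ideal: since the image height cannot increase under composition, an element of $K_{n-1}$ can never be produced from maps of strictly smaller height, so any generating set must meet every $\mathcal{R}^*$-class of $K_{n-1}$. By Theorem~\ref{zbt}(ii) the distinct kernel partitions displayed as the rows of Table~\eqref{tab1} furnish exactly $2n-1$ such classes, which is the source of the bound.

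For the upper bound I would cite Proposition~\ref{gs1}, which exhibits the explicit set $G$ with $|G|=2n-1$ and $\langle G\rangle=\langle K_{n-1}\rangle=\mathcal{OCP}_{n,n-1}$; hence $\textnormal{rank}(\mathcal{OCP}_{n,n-1})\leq 2n-1$. Combining the two inequalities gives the asserted equality $\textnormal{rank}(\mathcal{OCP}_{n,n-1})=2n-1$, which indeed coincides with Corollary~\ref{tr}. Because every ingredient---the identification in Corollary~\ref{cjr}, the $\mathcal{R}^*$-count in Lemma~\ref{lr}, and the generating set in Proposition~\ref{gs1}---is already in hand, I anticipate no genuine obstacle; the only point deserving care is the standard observation underlying Lemma~\ref{lr}, namely that composition in $\mathcal{OCP}_n$ cannot raise the image height, which is what makes the representatives of the top $\mathcal{R}^*$-classes indispensable in any generating set.
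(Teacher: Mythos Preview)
Your argument is correct for the statement as literally written, and in fact it simply reproduces Corollary~\ref{tr}: you invoke Corollary~\ref{cjr}, Lemma~\ref{lr}, and Proposition~\ref{gs1}, which is exactly how Corollary~\ref{tr} is obtained. So as a proof that $\textnormal{rank}(\mathcal{OCP}_{n,n-1})=2n-1$ there is nothing to object to.

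However, the paper's own proof of this proposition reads ``Follows from Corollaries~\ref{cjp} and~\ref{rwn}'', and those two corollaries are about $W_{n-1}$ and $\mathcal{ORCP}_{n,n-1}$, not about $K_{n-1}$ and $\mathcal{OCP}_{n,n-1}$. Coupled with the proposition's placement in Section~4 and the sentence introducing it (``We now have the rank of $\mathcal{OCP}_{n,n-1}$ \ldots''), it is clear that ``$\mathcal{OCP}_{n,n-1}$'' here is a typographical slip for ``$\mathcal{ORCP}_{n,n-1}$''; the result for $\mathcal{OCP}_{n,n-1}$ was already recorded as Corollary~\ref{tr}. Thus your proof, while valid for what is written, does not match the paper's proof because you and the paper are proving statements about different semigroups. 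If you replace your Section~3 citations by their Section~4 analogues---Corollary~\ref{cjp} in place of Corollary~\ref{cjr}, Lemma~\ref{rw} in place of Lemma~\ref{lr}, and Proposition~\ref{mgs}/Corollary~\ref{rwn} in place of Proposition~\ref{gs1}/Corollary~\ref{tr}---you obtain precisely the paper's (intended) argument.
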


\begin{proof} Follows from Corollaries~\ref{cjp} and~\ref{rwn}.
\end{proof}

 Next, it is not difficult to see that the top $\mathcal{J}$-class of $\mathcal{ORCP}_{n}$ (i. e., the $J_{n}$ class) contain only the elements, $1_{[n]}$ and $1^*_{[n]}$, where $1_{[n]}= \left(\begin{array}{ccccc}
         1 & 2 & 3 &\cdots & n\\
          1  & 2 & 3 & \cdots & n
          \end{array} \right)$. Thus, $\mathcal{ORCP}_{n}$ is a disjoint union of $\mathcal{ORCP}_{n,n-1}$ and $\{1_{[n]},1^*_{[n]}\}$. Consequently, we have

 \begin{theorem}
  Let $\mathcal{ORCP}_{n}$ be as defined in equation~\eqref{orcp}. Then, the rank of $\mathcal{ORCP}_{n}$ is $2n$ for all $n\geq 3$.
 \end{theorem}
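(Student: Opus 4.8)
The plan is to run the argument of Theorem~\ref{cr} in the presence of one extra unit. The structural input is the decomposition noted just above: $\mathcal{ORCP}_n$ is the disjoint union of its ideal $\mathcal{ORCP}_{n,n-1}$ and the two--element set $\{1_{[n]},1^*_{[n]}\}$ of units, which forms a cyclic group of order $2$ with $(1^*_{[n]})^2=1_{[n]}$; and by Proposition~\ref{rw2} we already have $\textnormal{rank}(\mathcal{ORCP}_{n,n-1})=2n-1$. So the target reduces to showing that a minimal generating set of $\mathcal{ORCP}_n$ consists of a minimal generating set of the ideal together with exactly one further element, namely $1^*_{[n]}$.

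For the upper bound I would take the $(2n-1)$-element generating set $M$ of $\mathcal{ORCP}_{n,n-1}$ from Proposition~\ref{mgs} and put $A=M\cup\{1^*_{[n]}\}$; since $1_{[n]}=(1^*_{[n]})^2\in\langle A\rangle$, we get $\langle A\rangle\supseteq\langle M\rangle\cup\{1_{[n]},1^*_{[n]}\}=\mathcal{ORCP}_n$, hence $\textnormal{rank}(\mathcal{ORCP}_n)\le 2n$. For the lower bound, let $A$ be any generating set. First, $1^*_{[n]}\in A$: as $1_{[n]}$ and $1^*_{[n]}$ are the only elements of height $n$ and the height of a product cannot exceed that of either factor, every factorisation of $1^*_{[n]}$ uses only these two elements and must involve $1^*_{[n]}$ itself. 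The remaining members of $A$ all lie in $\mathcal{ORCP}_{n,n-1}$ (having height $<n$), and one then argues that they already generate the ideal: given $\gamma\in\mathcal{ORCP}_{n,n-1}$ written as a product over $A$, the maximal runs of units can be evaluated inside $\{1_{[n]},1^*_{[n]}\}$ and pushed past the ideal factors using $(1^*_{[n]})^2=1_{[n]}$ together with the involutory automorphism $\alpha\mapsto 1^*_{[n]}\alpha\,1^*_{[n]}$ of $\mathcal{ORCP}_n$, so that $\gamma$ is recovered from a product of elements of $A\setminus\{1^*_{[n]}\}$. Thus $|A\setminus\{1^*_{[n]}\}|\ge\textnormal{rank}(\mathcal{ORCP}_{n,n-1})=2n-1$, whence $|A|\ge 2n$, and together with the upper bound this gives $\textnormal{rank}(\mathcal{ORCP}_n)=2n$.

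The step I expect to be the main obstacle is precisely this absorption argument. In Theorem~\ref{cr} the unit $1_{[n]}$ is the identity, so it can simply be deleted from any factorisation and the ideal generators are forced immediately; here $1^*_{[n]}$ is a non-trivial unit, and one must check that conjugating and left/right multiplying a fixed generator of $\mathcal{ORCP}_{n,n-1}$ by $1^*_{[n]}$ cannot manufacture enough new elements to let a generating set of size below $2n-1$ capture all $2n-1$ $\mathcal{R}^*$-classes of the top $\mathcal{D}^*$-class $W_{n-1}$ — in other words, that the reflection symmetry does not halve the count. I would control this via Theorem~\ref{zbt}: since $\mathcal{R}^*$ is decided by the kernel and $\mathcal{L}^*$ by the image, and since each $\mathcal{R}^*$-class of $W_{n-1}$ in Table~\eqref{tab2} splits into several $\mathcal{H}^*$-classes while the $1^*_{[n]}$-twists of one element span only boundedly many, no such economy is available; hence the reduction above stops the count at $2n-1$ and not below, and the theorem follows.
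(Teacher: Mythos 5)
Your upper bound is fine and is exactly the paper's: $M\cup\{1^*_{[n]}\}$ generates, so $\textnormal{rank}(\mathcal{ORCP}_n)\leq 2n$. For the lower bound you have put your finger on precisely the step that the paper itself passes over: the paper simply declares $\textnormal{rank}(\mathcal{ORCP}_n)=\textnormal{rank}(\mathcal{ORCP}_{n,n-1})+\textnormal{rank}(\{1_{[n]},1^*_{[n]}\})$ because $\mathcal{ORCP}_{n,n-1}$ is an ideal. That additive formula is automatic in Theorem~\ref{cr}, where the only unit is the identity and can be deleted from any factorisation; here $1^*_{[n]}$ acts nontrivially on the ideal, and eliminating it from a factorisation replaces generators $a$ by $1^*_{[n]}a$, $a1^*_{[n]}$ or $1^*_{[n]}a1^*_{[n]}$, which need not lie in the generating set. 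You saw this and reduced the problem to showing that the reflection symmetry ``does not halve the count'' of $\mathcal{R}^*$-classes that must be represented. But that is exactly where the argument collapses: the first-factor argument now only forces $\textnormal{ker }\gamma$ to equal $\textnormal{ker }b$ \emph{or its reflection} for some non-unit generator $b$, so the generators need only meet each orbit of the reflection on the $2n-1$ kernel classes (and dually on the image classes), and there are only about $n$ such orbits. The appeal to Theorem~\ref{zbt} and to $\mathcal{H}^*$-classes does not close this gap.

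Worse, the economy you hoped to exclude is genuinely available, so the lower bound $2n$ (and hence the theorem as stated) fails already at $n=3$. Put
\[
a=\alpha_{3,1}=\left(\begin{array}{cc}1&2\\2&3\end{array}\right),\qquad
c=\alpha^*_{2,2}=\left(\begin{array}{cc}1&3\\3&1\end{array}\right),\qquad
d=\pi_{(1,2),3}=\left(\begin{array}{cc}\{1,2\}&3\\1&2\end{array}\right),\qquad
e=1^*_{[3]},
\]
and write $\phi(x)=exe$. Direct computation gives $a\phi(a)=1_{\{1,2\}}$, $\phi(a)a=1_{\{2,3\}}$, $cc=1_{\{1,3\}}$, $cd=\alpha^*_{2,3}$ and $da=\pi_{(1,2),1}$, and one checks that the $22$ elements of $W_2$ are precisely the union of the orbits of $a$, $1_{\{1,2\}}$, $c$, $cd$, $d$ and $da$ under the maps $x\mapsto uxv$ with $u,v\in\{1_{[3]},e\}$. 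Since $e^2=1_{[3]}$ and $W_1\subseteq\langle W_2\rangle$ by Lemma~\ref{jp}, this yields $\mathcal{ORCP}_3=\langle a,c,d,e\rangle$ and therefore $\textnormal{rank}(\mathcal{ORCP}_3)\leq 4<6$. So the lower-bound half of your proposal cannot be completed as written (nor can the paper's); the quantity actually controlling the answer is the relative rank of $\mathcal{ORCP}_{n,n-1}$ modulo the two-element group of units, which is governed by reflection-orbits of kernel and image classes rather than by the $2n-1$ classes themselves.
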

\begin{proof}
  First, notice that since $\mathcal{ORCP}_{n,n-1}$ is an ideal of $\mathcal{ORCP}_{n}$, then it cannot generate any element in the upper $\mathcal{J}$-class. Furthermore, by simple multiplication one can show that $(1^*_{[n]})^2= 1_{[n]}$, $(1_{[n]})^2=1_{[n]}$, $1^*_{[n]} 1_{[n]} =1^*_{[n]}$ and $1_{[n]} 1^*_{[n]} =1^*_{[n]}$. Thus, $1^*_{[n]}$ generates $\{1_{[n]},1^*_{[n]}\}$ and $\{1_{[n]},1^*_{[n]}\}$ generates no other element in the lower $\mathcal{J}$-class. Consequently, we have rank($\mathcal{ORCP}_{n})=\textnormal{rank}(\mathcal{ORCP}_{n,n-1})+ \textnormal{rank}(\{1_{[n]},1^*_{[n]}\})$, which is $2n$.
\end{proof}

\end{document}